\numberwithin{equation}{section}
\theoremstyle{plain}
\newtheorem{thm}{Theorem}[section]
\newtheorem{rmk}[thm]{Remark}
\newtheorem{lem}[thm]{Lemma}
\newtheorem{prop}[thm]{Proposition}
\theoremstyle{definition}
\newtheorem{defin}[thm]{Definition}
\newtheorem{hyp}[thm]{Hypothesis}
\newcommand{\N}{\mathbb{N}}
\newcommand{\sppesoalpha}{C_p^\alpha(\RN)}
\newcommand{\sppesoalphateta}{C_p^{\theta+\alpha}(\RN)}
\newcommand{\sppesoteta}{C_p^\theta(\RN)}
\newcommand{\spderteta}{C^\theta(\mathbb{R}^N)}
\newcommand{\sppeso}{C_p(\mathbb{R}^N)}
\newcommand{\splim}{C_b(\mathbb{R}^N)}
\newcommand{\spcon}{C(\mathbb{R}^N)}
\newcommand{\OU}{P_{s,t}}
\newcommand{\OUT}{P_{s,T}}
\newcommand{\R}{\mathbb{R}}
\newcommand{\RN}{\mathbb{R}^N}
\newcommand{\polx}{1+\abs{x}^{2m}}
\newcommand{\poly}{1+\abs{y}^{2m}}
\newcommand{\expgammax}{e^{(1+\abs{x}^2)^{\gamma}}}
\newcommand{\expgammay}{e^{(1+\abs{y}^2)^{\gamma}}}
\DeclarePairedDelimiter{\abs}{\lvert}{\rvert}
\DeclarePairedDelimiter{\norma}{\lVert}{\rVert}
\DeclarePairedDelimiter{\bigabs}{\Bigl{\lvert}}{\Bigl{\rvert}}
\DeclarePairedDelimiter{\bignorma}{\Bigl{\lVert}}{\Bigl{\rVert}}
\DeclarePairedDelimiter{\prodscal}{\langle}{\rangle}
\DeclarePairedDelimiter{\normak}{|\!\lVert}{\rVert\!|}
\title[Nonautonomous Ornstein-Uhlenbeck operators in weighted spaces]{Nonautonomous Ornstein-Uhlenbeck operators in weighted spaces of continuous functions}
\author{Davide Addona}
\address{Dipartimento di Matematica, Universit\`a degli Studi di Milano Bicocca, Via Cozzi, 53, I-20155 Milano (Italy)}
\email{d.addona@campus.unimib.it}
\begin{document}

\subjclass[2000]{Primary: 47F05; Secondary, 35B65, 47B07, 46B70}

\keywords{nonautonomous parabolic equations,
weighted spaces of continuous functions, uniform estimates,
nonhomogeneous Cauchy problems, optimal regularity results, compactness}

\begin{abstract}
We consider the nonautonomous Ornstein-Uhlenbeck operator in some weighted spaces of continuous functions in $\R^N$. We prove sharp uniform estimates for the spatial derivatives of the associated evolution operator $\OU$, which we use to prove optimal Schauder estimates for the solution to some nonhomogeneous parabolic Cauchy problems associated with the Ornstein-Uhlenbeck operator.
We also prove that, for any $t>s$, the evolution operator $P_{s,t}$ is compact in the previous weighted spaces.
\end{abstract}

\maketitle

\section{Introduction}
The nondegenerate nonautonomous Ornstein-Uhlenbeck operator $L(t)$, defined on smooth functions $\varphi:\RN\to\R$ by
\begin{equation}
\label{eq1:defOUdiff}
L(t)\varphi(x)=\frac{1}{2}{\rm Tr}[Q(t)D^2\varphi(x)]+\prodscal{A(t)x+h(t),D\varphi(x)},
\end{equation}
is the prototype of nonautonomous elliptic operators with unbounded coefficients.
Here, $A,Q:\R\to \R^{N\times N}$, $h:\R^N\to\R$ are continuous and bounded functions, and $\langle Q(t)x,x\rangle>\mu_0\|x\|^2$ for any $x\in\R^N$, any $t\in\R$ and some positive
constant $\mu_0$,
Such an operator has wide applications in many fields of applied sciences. It naturally arises
in the study of the backward stochastic equation
\begin{equation}
\left\{
\begin{array}{l}
dX_t=(A(t)X_t+h(t))dt+(Q(t))^{1/2}dW_t, \quad t\geq0, \\[2mm]
X_s=x,
\end{array}
\right.
\label{intro-1}
\end{equation}
where $x\in\R^N$ and $W_t$ is a standard Brownian motion.
The unique mild solution to \eqref{intro-1} is given by
\[
X(t,s,x)=U(t,s)x+\int_s^tU(t,r)h(r)dr+\int_s^tU(t,r)(Q(r))^{1/2}dW_r,
\]
where $U(t,s)$ is the evolution operator associated to $A(t)$, i.e., $U(t,s)$ is the solution of
the Cauchy problem
\begin{equation}
\label{eq1:sistemaU}
\left\{
\begin{array}{ll}
\displaystyle\frac{\partial U}{\partial t}(t,s)=A(t)U(t,s), \quad t\in\R, \\[3mm]
U(s,s)=I.
\end{array}
\right.
\end{equation}

The family of bounded operators $P_{s,t}$, defined by
\begin{equation}
\OU \varphi(x)=\mathbb{E}[\varphi(X(t,s,x))]=\int_{\R^N}\varphi(y)\mathcal{N}_{a(t,s),Q(t,s)}(dy),
\label{OU-intro}
\end{equation}
for any bounded and continuous function $\varphi:\R^N\to\R$ (in short $\varphi\in C_b(\R^N)$), any $s,t\in\R$, with $s<t$, and any $x\in\R^N$, is the so-called nonautonomous Ornstein-Uhlenbeck evolution operator.
Here, $\mathcal{N}_{a(t,s),Q(t,s)}(dy)$ is the Gaussian measure with mean
\begin{equation}
a(s,t,x)=U(t,s)x+g(t,s):=U(t,s)x+\int_s^tU(t,r)h(r)dr, \quad s<t,
\label{eq1:defg(t,s)}
\end{equation}
and covariance operator
\begin{eqnarray*}
\label{eq1:defQ(t,s)}
Q(t,s)=\int_s^tU(t,r)Q(r)U^*(t,r)dr, \quad s<t.
\end{eqnarray*}

As in the autonomous case (see e.g., \cite{bertoldi-lorenzi,daprato1:articolo,lorenzi0:tesi,lorenzi-lunardi,lunardi1:articolo,metafune:tesi,metafune2:tesi,metafune3:articolo}), nonautonomous Ornstein-Uhlenbeck operators have been studied in the last years both in $C_b(\R^N)$ (see \cite{daprato1:tesi,lorenzi3:tesi}) and in $L^p$-spaces related to families of probability measures $\{\mu_t:t\in\R\}$ (the so called ``evolution systems of invariant measures'', see \cite{daprato1:tesi,daprato2:tesi,GeisLun08,geiss1:tesi,geiss2:tesi}), characterized by the following property:
\[
\int_{\R^N}\OU f(y)\mu_s(dy)=\int_{\R^N}f(y)\mu_t(dy),
\]
for any $s<t$, $f\in\splim$. These systems of invariant measures are the natural counterpart of the invariant measures of the autonomous case.
Differently from the autonomous case, where the invariant measure of the Ornstein-Uhlenbeck operator, if existing, is typically unique, in the nonautonomous case, the evolution systems of
invariant measures associated to $P_{s,t}$ are infinitely many and they have been all characterized in \cite{GeisLun08}. Among all of them, there is one system consisting
of gaussian measures, which plays a crucial role in the analysis of the asymptotic behaviour of the evolution operator $P_{s,t}$ (see \cite{geiss1:tesi}).

An important motivation for the study of nonautonomous Ornstein-Uhlenbeck (evolution) operators is that a complete understanding of their main properties sheds a light on more general nonautonomous elliptic operators with unbounded coefficients. As it has been already stressed, the most important peculiarity of nonautonomous Ornstein-Uhlenbeck operators is that an explicit representation formula is available for $\OU$, and this
is not the case of evolution operators associated to more general elliptic operators with unbounded coefficients.

The first paper where nonautonomous Ornstein-Uhlenbeck operators have been considered is \cite{daprato1:tesi}. In such a paper, assuming that the coefficients $h$, $A$ and $Q$ of the operator $L(t)$ are time periodic, the
authors prove that the function $(s,x)\mapsto P_{s,t}\varphi(x)$ is the unique classical solution
of the Cauchy problem
\begin{eqnarray*}
\left\{
\begin{array}{ll}
D_su(s,x)+L(s)u(s,x)=0, \quad s<t, & x\in\R^N, \\[2mm]
u(t,x)=\varphi(x), & x\in\R^N,
\end{array}
\right.
\end{eqnarray*}
for any $\varphi:\R^N\to\R$, which is bounded, twice continuously differentiable in $\R^N$, with bounded derivatives. They also study the asymptotic behavior of $\OU$, as $s\rightarrow -\infty$ and as $t\rightarrow+\infty$.

Next, in \cite{GeisLun08,geiss1:tesi,geiss2:tesi} the maximal regularity of $P_{s,t}$ in $L^p$-spaces with respect to the unique system of invariant measures
of Gaussian type and some of the main properties of the evolution semigroup associated to $\OU$ (such as the characterization of
the domain and the spectrum of its infinitesimal generator in a suitable $L^p$-space) are studied, also in the nonperiodic setting.

More general nondegenerate nonautonomous elliptic operators $A(t)$ have been considered recently in \cite{angiuli:tesi,AL,ALL,Kunzelorlun1:tesi,lavoro-claude,lorenzi4:tesi}, see also \cite{lorenzi-survey} for
a survey of recent results.
Under suitable assumptions on their coefficients, it has been proved that an evolution operator $G(t,s)$ can be associated to $A(t)$ in $C_b(\R^N)$.
Many remarkable properties of $G(t,s)$ have been studied both in $C_b(\R^N)$ and in $L^p$-spaces related to evolution systems of measures (when these latter
exist).

On the contrary, to the best of our knowledge, nonautonomous elliptic operators with unbounded coefficients have not been yet considered in weighted spaces
of continuous functions.

In this paper we deal with the evolution operator $\OU$ in weighted spaces of continuous functions.
We consider two families of weight functions: the polynomial weights, defined by $p(x)=1+\abs x^{2m}$ for any $x\in\R^N$ and some $m\in\N$, and the exponential weights, defined by $p(x)=\expgammax$ for any $x\in\R^N$ and some $\gamma\in(0,1/2]$.
We study some remarkable smoothing properties of the evolution operator $P_{s,t}$, defined in \eqref{OU-intro}, in the
weighted spaces of (H\"older) continuous functions $\sppeso$ and $\sppesoteta$ (see Definitions \ref{defin-2.1} and \ref{defin-2.2}),
showing that, for any $f\in\sppeso$, the function $\OU f$ belongs to $C_p^3(\mathbb{R}^N)$ and
\begin{equation}
\label{eq0:stimaOUsphspkgen}
\norma{\OU f}_{\sppesoteta}\leq\frac{C_{\alpha,\theta}e^{\omega_{\alpha,\theta}(t-s)}}{(t-s)^{(\theta-\alpha)/2}} \norma{f}_{\sppesoalpha}, \quad f\in\sppesoalpha,
\end{equation}
for any $\alpha,\theta\in [0,3]$ with $\alpha\le\theta$ and some positive constants $\omega_{\alpha,\theta}$ and $C_{\alpha,\theta}$.
We prove \eqref{eq0:stimaOUsphspkgen} for $\theta\le 3$, since this is enough for our purposes. Nevertheless, using the same techniques \eqref{eq0:stimaOUsphspkgen} can be extended
to any $\alpha,\theta\ge 0$ with $\alpha\le\theta$.

We also prove that, for any $s<t$, $\OU$ is a compact operator from $C_p(\R^N)$ into itself.

As a byproduct of \eqref{eq0:stimaOUsphspkgen}, we
prove optimal Schauder estimates for the solution to the nonhomogeneous backward Cauchy problem
\begin{equation}
\label{eq0:eqOUnonomo}
\left\{
\begin{array}{ll}
D_su(s,x)+L(s)u(s,x)=f(s,x), \quad s\in[a,T), & x\in\R^N, \\[2mm]
u(T,x)=\varphi(x), & x\in\R^N.
\end{array}
\right.
\end{equation}
More precisely, we prove that, if $f:[a,T]\times\RN\to\R$ is a continuous function such that
\[
\left\{
\begin{array}{ll}
f(s,\cdot)\in\sppesoteta, & \forall s\in[a,T], \\[2mm]
\displaystyle\sup_{s\in[a,T]}\norma{f(s,\cdot)}_{\sppesoteta}<\infty,
\end{array}
\right.
\]
and $\varphi\in C_p^{2+\theta}(\RN)$ for some $\theta\in(0,1)$, then the
Cauchy problem \eqref{eq0:eqOUnonomo} admits a unique classical solution $u$ (where
by classical solution we mean a function $u\in C_p([a,T]\times\RN)\cap C^{1,2}([a,T)\times\RN)$ which solves \eqref{eq0:eqOUnonomo}).
Moreover, there exists a positive constant $c$, independent of $f$ and $\varphi$, such that
\[
\sup_{t\in[a,T]}\norma{u(t,\cdot)}_{C_p^{2+\theta}(\R^N)}\leq c(\norma{\varphi}_{C_p^{2+\theta}(\R^N)}+\sup_{t\in[a,T]}\norma{f(t,\cdot)}_{C_p^\theta(\R^N)}).
\]
This estimate shows that the solution $u$ has optimal spatial regularity. On the other hand, one
cannot expect optimal time regularity, even if one assumes some time H\"older regularity on the data.
This is a typical feature of elliptic operators with unbounded coefficients and it is one of the main differences with the classical case of elliptic operators with bounded coefficients.

\medskip

The paper is organized as follows. In Section $2$, we study the Ornstein-Uhlenbeck evolution operator
in $C_p(\R^N)$ and in $C^{\theta}_p(\R^N)$ ($\theta\in (0,3]$), and we prove
the estimates \eqref{eq0:stimaOUsphspkgen}.
First, in Subsections 2.1 and 2.2, we prove \eqref{eq0:stimaOUsphspkgen} when
$\alpha$ and $\theta$ are integers.
Then, using an interpolation argument and a different characterization of the spaces
$C^{\theta}_p(\R^N)$, in Subsection 2.3 we prove estimate \eqref{eq0:stimaOUsphspkgen} in its full generality.

In Subsections 2.4 and 2.5 we prove some additional continuity properties of the evolution operator, which will be used in Section 3 to
prove the main result of this paper, and we prove that $\OU$ is a compact operator in $C_p(\R^N)$, for any $s<t$.

Section 3 is devoted to the study of the nonhomogeneous backward Cauchy problem \eqref{eq0:eqOUnonomo}.

\subsection*{Notation}
In this paper, we denote by $B_b(\R^N)$ the space of all Borel-measurable and bounded functions $f:\R^N\to\R$, and by $C_b(\R^N)$ its subset of continuous functions, endowed with the sup-norm.
$C^k_b(\R^N)$ ($k>0$) is the subspace of $C^k(\R^N)$ of functions which are bounded together with their derivatives up to the $[k]$-th order ($[k]$ denoting the integer part of $k$).
$C^k_b(\R^N)$ is endowed with the norm
$$
\|f\|_{C^k_b(\RN)}=\sum_{|\alpha|\le [k]}\|D^{\alpha}f\|_{\infty}+\sum_{|\alpha|=[k]}|D^{\alpha}f]_{k-[k]},\qquad\;\,\forall f\in C^k_b(\RN),
$$
By $\chi_A$ we denote the characteristic function of the set $A$, i.e., $\chi_A(x)=1$ if $x\in A$ and $\chi_A(x)=0$ otherwise.

If $X,Y$ are Banach spaces and $T:X\to Y$ is a bounded linear operator (i.e., $T\in \mathcal{L}(X,Y)$), we denote by $\norma T$ the canonical operator norm. Moreover, we denote by $\mathcal{N}_{a,Q}$ the Gaussian measure of mean $a$ and covariance $Q$, for any $a\in\R^N$ and $Q\in L(\R^N,\R^N)$.
If $a=0$, we simply write $\mathcal{N}_Q$ in place of $\mathcal{N}_{0,Q}$. We recall that, if $Q>0$, then
\begin{equation}
\label{eq1:asscontgaussleb}
\mathcal{N}_{a,Q}(dy)
= \frac{1}{\sqrt{(2\pi)^N}\left({\rm det}\,Q\right)^{1/2}}e^{-\frac{1}{2}\prodscal{Q^{-1}(y-a),y-a}}dy \\
=: F(y)dy.
\end{equation}
Given a positive definite matrix $Q$, we denote by $Q^{1/2}$ its square root.
If $A$ is a matrix, we denote by $A^i$ its $i$-th column.
Finally, by $\delta_{\omega}$ we denote the Kronecker delta at $\omega$, i.e., $\delta_{\omega}(x)=0$ if $x\neq\omega$ and $\delta_{\omega}(\omega)=1$.

\section{The Ornstein-Uhlenbeck operator in weighted spaces of continuous functions}

Throughout the paper we consider two families of weight functions:
\begin{itemize}
\item
the polynomial weight functions, defined by $p(x)=\polx$ for any $x\in\R^N$ and some $m\in\N$;
\item
the exponential weight functions, defined by $p(x)=\expgammax$ for any $x\in\R^N$ and some $\gamma\in(0,1/2]$.
\end{itemize}

Let us recall the function spaces that we consider in this paper:
\begin{defin}
\label{defin-2.1}
Let $p$ be one of the above weight functions. We define the following function spaces:
\[
\begin{split}
\sppeso & = \Bigl\{f\in\spcon:\frac{f}{p}\in\splim\Bigl\}; \\
\sppesoteta & = \Bigl\{f\in\spderteta:D^{\alpha}f\in\sppeso,\forall\abs{\alpha}\leq \theta\Bigl\}, \quad \theta\in\N.
\end{split}
\]
\end{defin}
Endowed with their natural norms
\[
\begin{split}
\norma{f}_{\sppeso} & := \bignorma{\frac{f}{p}}_{\splim}; \\[1mm]
\norma{f}_{\sppesoteta} & :=\sum_{\abs{\alpha}\leq \theta}\norma{D^{\alpha}f}_{\sppeso},
\end{split}
\]
the previous spaces are Banach spaces.
\medskip

We will also deal with weighted H\"{o}lder spaces:
\begin{defin}
\label{defin-2.2}
Let $\alpha\in (0,1)$, $\theta\in\N$ and let $p$ be as above. We set
\[
\begin{split}
& \sppesoalpha=\left\{f\in\sppeso:f/p\in C_b^\alpha(\R^N)\right\} ,\\
& \sppesoalphateta=\{f\in\sppesoteta:D^\beta f\in\sppesoalpha, \forall\abs\beta=\theta\}.
\end{split}
\]
Endowed with the norms
\[
\|f\|_{C^{\alpha}_p(\R^N)}:=\|f\|_{C_p(\R^N)}+[f]_{C^{\alpha}_p(\R^N)}:=\|f\|_{C_p(\R^N)}+[f/p]_{C^{\alpha}_b(\R^N)}
\]
and
\[
\norma{f}_{\sppesoalphateta}:=\norma{f}_{\sppesoteta}+\sum_{\abs\beta=\theta}[D^\beta f]_{\sppesoalpha},
\]
the previous spaces are Banach spaces.
\end{defin}

Throughout the paper we assume that the following assumptions are satisfied.

\begin{hyp}
\label{hyp1:coeffA}
The function $t\mapsto A(t)$ is bounded, i.e., there exists $A_\infty>0$ such that $\abs{a_{ij}(t)}\leq A_\infty$, for every $i,j\in \{1,\ldots,N\}$, $t\in\R$.
\end{hyp}

\begin{hyp}
\label{hyp1:coeffB}
The function $t\mapsto Q(t)$ is bounded and $Q(t)$ is uniformly elliptic, i.e. there exist $Q_\infty>0$, $C>0$ such that $\abs{q_{ij}(t)}\leq Q_\infty$, for every $i,j\in \{1,\ldots,N\}$, $t\in\R$ and
\[
\prodscal{Q(t)x,x}\geq C\abs x^2, \quad t\in\R, \quad x\in\RN.
\]
\end{hyp}

Moreover, when we deal with exponential weight functions, we also assume the following additional condition.
\begin{hyp}
\label{hyp-3}
Estimate \eqref{eq1:stimaU} here below is satisfied with $\omega>0$ and $M=1$.
\end{hyp}

The main result of this section is the following theorem:

\begin{thm}
\label{teo1:contspalphaint}
For any $0\le\alpha\leq\theta\leq3$  there exist two positive constants $C_{\alpha,\theta}$ and $\omega_{\alpha,\theta}$, such that
\begin{equation}
\label{eq1:contsppeso}
\norma{\OU f}_{\sppesoteta}\leq \frac{C_{\alpha,\theta} e^{\omega_{\alpha,\theta}(t-s)}}{(t-s)^{(\theta-\alpha)/2}}\norma{f}_{\sppesoalpha},
\end{equation}
for every $s,t\in\R$, with $s<t$, and every $f\in\sppesoalpha$.
\end{thm}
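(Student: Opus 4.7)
\medskip
\noindent\textbf{Proof strategy.}
The starting point is the explicit Gaussian representation
\[
\OU f(x)=\int_{\R^N}f(a(t,s,x)+Q(t,s)^{1/2}z)\mathcal{N}_I(dz),
\]
obtained from \eqref{OU-intro} by the change of variables $y=a(t,s,x)+Q(t,s)^{1/2}z$. The plan is to prove \eqref{eq1:contsppeso} first at the integer endpoints $(\alpha,\theta)\in\{(0,0),(0,1),(0,2),(0,3),(1,1),(2,2),(3,3)\}$, then fill in the remaining integer pairs $\alpha\le\theta$ with $\alpha,\theta\in\{0,1,2,3\}$, and finally recover the full H\"older range via real interpolation of the weighted spaces $\sppesoalpha$.

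For the base case $\alpha=\theta=0$, one bounds $|f(y)|\le\|f\|_{\sppeso}\,p(y)$ and needs the estimate
\[
\int_{\R^N}p(a(t,s,x)+Q(t,s)^{1/2}z)\mathcal{N}_I(dz)\le C\,e^{\omega(t-s)}\,p(x).
\]
For $p(x)=\polx$ this follows from Hypothesis \ref{hyp1:coeffA}, the boundedness of $\|U(t,s)\|$, and the finiteness of Gaussian moments. For $p(x)=\expgammax$ with $\gamma\le 1/2$ one uses Hypothesis \ref{hyp-3} ($\|U(t,s)\|\le e^{-\omega(t-s)}$, $\omega>0$), the uniform bound on $\|Q(t,s)\|$, and the exponential moment bound $\int e^{c|z|^2/2}\mathcal{N}_I(dz)<\infty$ for suitable $c<1$; the condition $\gamma\le 1/2$ is what keeps the resulting Gaussian integral convergent. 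For the cases $\alpha=0$, $\theta\in\{1,2,3\}$, I would differentiate under the integral sign, writing
\[
D^\beta(\OU f)(x)=\int_{\R^N}f(y)\,H_\beta(t,s,x,y)\,\mathcal{N}_{a(t,s,x),Q(t,s)}(dy),
\]
where $H_\beta$ is an explicit polynomial in $Q(t,s)^{-1}(y-a(t,s,x))$ with coefficients depending on $U(t,s)$; since $\prodscal{Q(t,s)x,x}\ge C(t-s)|x|^2$ by Hypothesis \ref{hyp1:coeffB}, each factor $Q(t,s)^{-1}$ contributes a $(t-s)^{-1}$ blow-up, giving exactly $(t-s)^{-|\beta|/2}$ after the change of variables and the Gaussian moment estimate. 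The diagonal cases $\alpha=\theta\in\{1,2,3\}$ are treated by moving the derivative onto $f$: differentiating $f(a(t,s,x)+Q(t,s)^{1/2}z)$ in $x$ using the chain rule produces terms of the form $(U(t,s))^{\otimes |\beta|}\,D^\gamma f(a(t,s,x)+Q(t,s)^{1/2}z)$ with $|\gamma|=|\beta|$, and no singular factor appears.

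For mixed integer indices $0<\alpha<\theta$ I would combine the previous two ingredients: write $\OU=P_{(s+t)/2,t}\circ P_{s,(s+t)/2}$, apply the diagonal estimate on the first half (which preserves the weighted $C^{\alpha}$ norm) and the off-diagonal estimate $(\alpha=0,\theta-\alpha)$ on the second half. This produces a constant of the right form $C\,e^{\omega(t-s)}(t-s)^{-(\theta-\alpha)/2}$.

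Finally, to pass to non-integer $\alpha,\theta$, I would use that the weighted H\"older spaces $\sppesoalpha$, $\sppesoalphateta$ are characterized as (real) interpolation spaces between consecutive integer-order weighted spaces; interpolating \eqref{eq1:contsppeso} between integer endpoints with the same $(\theta-\alpha)$ and then between different $(\theta-\alpha)$ yields the estimate in full generality. The main technical obstacle is the base $C^0_p\to C^0_p$ bound in the exponential-weight case: this is where the restriction $\gamma\le 1/2$ and the dissipativity Hypothesis \ref{hyp-3} are used in an essential way, and where the exponential factor $e^{\omega_{\alpha,\theta}(t-s)}$ in \eqref{eq1:contsppeso} originates.
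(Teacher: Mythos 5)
Your overall architecture (integer cases by direct Gaussian-kernel estimates, then real interpolation via the characterization of the weighted H\"older spaces as interpolation spaces between integer-order weighted spaces) is exactly the paper's, and your base case $\alpha=\theta=0$, the off-diagonal cases $(0,\theta)$ by differentiating the kernel with $\norma{Q(t,s)^{-1/2}}\lesssim (t-s)^{-1/2}$, and the diagonal cases by the chain rule are all sound. The one step that does not work as you state it is the mixed integer case $0<\alpha<\theta$. You propose to write $\OU=P_{s,s_1}\circ P_{s_1,t}$ with $s_1=(s+t)/2$, apply the diagonal estimate to one factor and the off-diagonal estimate ``$(0,\theta-\alpha)$'' to the other. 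But the only off-diagonal bounds available at that stage are of the form $\mathcal L(\sppeso,C_p^{\theta-\alpha}(\RN))$; composing $\sppesoalpha\to\sppesoalpha$ with $\sppeso\to C_p^{\theta-\alpha}(\RN)$ yields a map into $C_p^{\theta-\alpha}(\RN)$, not into $\sppesoteta$, so no arrangement of these two operator-norm estimates produces the target $\mathcal L(\sppesoalpha,\sppesoteta)$ bound. The correct mechanism is not a composition of operator norms but an identity at the level of derivatives: commute $\alpha$ of the $\theta$ derivatives through the evolution operator onto $f$, using $D_{x_j}\OU g(x)=\prodscal{U(t,s)^j,\OU\nabla g(x)}$ (so that $D^\gamma\OU f$, $\abs\gamma=\alpha$, is a combination of $\OU D^\gamma f$ with bounded coefficients), and only then differentiate the Gaussian kernel the remaining $\theta-\alpha$ times. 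This is what the paper does --- it is the meaning of the remark that ``the smoothness of $f$ allows us to differentiate only $(\theta-\alpha)$-times the kernel $F$'' --- and the same commutation identity, combined with the splitting at $s_1$, is also how the paper reaches $\theta=2,3$ from the $C_p\to C_p^1$ estimate without computing third-order Hermite polynomials. You already have this identity implicitly in your diagonal argument, so the gap is repairable, but as written the composition step is wrong.

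Two smaller points. First, your attribution of the restriction $\gamma\le 1/2$ to ``convergence of the Gaussian integral'' is not quite right: the integral $\int e^{c\abs z^{2\gamma}}e^{-\abs z^2}dz$ converges for all $\gamma<1$. What $\gamma\le 1/2$ (together with Hypothesis \ref{hyp-3}, i.e.\ $\norma{U(t,s)}\le 1$) actually buys is the subadditivity $(1+\abs{U(t,s)x+\cdot}^2)^\gamma\le \abs x^{2\gamma}+\cdots$, which makes the \emph{ratio} $\OU p/p$ bounded; Lemma \ref{lem1:CNexp} shows this fails whenever $\norma{U(t,s)}>1$. Second, the interpolation identity $(\sppesoalpha,\sppesoteta)_{\gamma,\infty}=C_p^{\alpha+\gamma(\theta-\alpha)}(\RN)$ that your last step relies on does require proof in the weighted setting; the paper obtains it by transporting the classical unweighted result through the isomorphism $f\mapsto f/p$, after checking that $\sppesoteta=\{f:\,f/p\in C_b^\theta(\RN)\}$ with equivalent norms. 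You should include that reduction explicitly.
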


Besides their own interest, these estimates will be crucial to prove the existence of a classical solution to the Cauchy problem \eqref{eq0:eqOUnonomo} and to prove optimal Schauder estimates. In Subsections \ref{subs:polweight} and \ref{subs:expweight} we will first prove \eqref{eq1:contsppeso} when $\alpha$ and $\theta$ are natural numbers. Then, by an interpolation argument we extend  \eqref{eq1:contsppeso} to the general case.

The following two results are well known. For the reader's convenience, we provide short proofs.
\begin{lem}
\label{lem:stimaU}
There exist $M\geq1$, $\omega\in\mathbb{R}$ such that
\begin{align}
\label{eq1:stimaU}
\norma{U(t,s)} & \leq Me^{-\omega(t-s)}, \quad s<t; \\
\label{eq1:stimaU2}
\norma{U(t,s)} & \leq Me^{-\omega(s-t)}, \quad s>t.
\end{align}
\end{lem}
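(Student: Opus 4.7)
The plan is to derive both estimates directly from the defining ODE \eqref{eq1:sistemaU} together with the boundedness of $A$ given by Hypothesis \ref{hyp1:coeffA}. The content of the lemma is just the standard exponential bound for the propagator of a linear time-varying ODE with bounded generator; one expects $\omega$ to be (possibly) negative, i.e.\ the bounds to express at most exponential growth in $|t-s|$, and this is all that is needed for the subsequent analysis.

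For the forward case $s<t$, I would integrate \eqref{eq1:sistemaU} to obtain the integral equation
\begin{equation*}
U(t,s)=I+\int_s^t A(r)U(r,s)\,dr,
\end{equation*}
take operator norms, and use Hypothesis \ref{hyp1:coeffA} to bound $\|A(r)\|\le c$ for some constant $c=c(N,A_\infty)$ (since all entries of $A(r)$ are controlled by $A_\infty$). This yields
\begin{equation*}
\|U(t,s)\|\le 1+c\int_s^t \|U(r,s)\|\,dr,
\end{equation*}
and an application of Gronwall's lemma on $[s,t]$ gives $\|U(t,s)\|\le e^{c(t-s)}$. Setting $\omega:=-c$ and $M:=1$ produces \eqref{eq1:stimaU}.

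For the backward case $s>t$, I would differentiate the identity $U(t,s)U(s,r)=U(t,r)$ in $s$ (or equivalently reverse time, setting $W(\tau)=U(s-\tau,s)$ for $\tau\in[0,s-t]$) to derive the companion ODE
\begin{equation*}
\frac{\partial U}{\partial s}(t,s)=-U(t,s)A(s),\qquad U(t,t)=I.
\end{equation*}
Integrating from $t$ to $s$ yields $U(t,s)=I+\int_t^s U(t,r)A(r)\,dr$, and exactly the same Gronwall estimate (now applied in the variable $s$ with $t$ fixed) gives $\|U(t,s)\|\le e^{c(s-t)}$, which is \eqref{eq1:stimaU2} with the same $M$ and $\omega$ as above.

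There is no real obstacle here: the only point requiring minimal care is to carry out both computations with the same constants, so that a single pair $(M,\omega)$ works in both inequalities, and to make the passage from the entry-wise bound on $A(t)$ in Hypothesis \ref{hyp1:coeffA} to the operator-norm bound on $A(t)$ explicit (e.g.\ via $\|A(t)\|\le N A_\infty$). One may also observe, for later use in Hypothesis \ref{hyp-3}, that under additional dissipativity assumptions on $A(t)$ the constant $\omega$ can be taken strictly positive and $M=1$, by differentiating $|U(t,s)x|^2$ and using $\langle A(r)y,y\rangle\le-\omega|y|^2$; but this refinement is not needed to establish the lemma as stated.
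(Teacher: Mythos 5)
Your proposal is correct and follows essentially the same route as the paper: integrate \eqref{eq1:sistemaU}, take norms, apply Gronwall using the boundedness of $A$ from Hypothesis \ref{hyp1:coeffA}, and handle the case $s>t$ via the companion equation $\partial_s U(t,s)=-U(t,s)A(s)$. Your added remarks on passing from the entrywise bound on $A(t)$ to an operator-norm bound and on when $M=1$, $\omega>0$ can be achieved are sensible but do not change the argument.
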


\begin{proof}
Fix $s,t\in\R$, with $t>s$. Integrating \eqref{eq1:sistemaU} between $s$ and $t$, and then computing the norms, we get
\[
\norma{U(t,s)}\leq M+\int_s^t\norma{A(r)}\norma{U(r,s)}dr, \quad s<t,
\]
for some $M\geq1$. Now estimate \eqref{eq1:stimaU} follows from Hypothesis \ref{hyp1:coeffA} and Gronwall lemma, with $-\omega=A_\infty$.

Estimate \eqref{eq1:stimaU2} can be proved similarly, observing that, since $U(s,t)=U(t,s)^{-1}$ for every $s,t\in\R$,
\[
\frac{\partial U}{\partial s}(t,s)=-U(t,s)A(s).
\]
\end{proof}

Lemma \ref{lem:stimaU} and an easy computation show that
\[
\abs{g(t,s)}
\leq W(t-s)e^{-\omega\_(t-s)},\quad\;\,s<t,
\]
where $g$ is given by \eqref{eq1:defg(t,s)},
\begin{equation}
\label{eq1:stimag1}
W(t-s) = M\norma{h}_\infty\left(\frac{1}{\abs{\omega}}(1-\delta_\omega(0))+(t-s)\delta_\omega(0)\right),
\end{equation}
$\omega\_=\min\{0,\omega\}$.

\begin{lem}
\label{lemma1:maggQ(t,s)}
There exists a positive constant $H$ such that
\begin{align}
\label{eq1:stimaQ1}
\norma{Q(t,s)^{-1/2}} \leq\frac{He^{-\omega\_(t-s)}}{(t-s)^{1/2}}, \\
\label{eq1:stimaQ2}
\norma{Q(t,s)^{-1}} \leq\frac{H^2e^{-2\omega\_(t-s)}}{t-s},
\end{align}
for every $s<t$.
\end{lem}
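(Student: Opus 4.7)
The plan is to exploit the integral representation $Q(t,s)=\int_s^tU(t,r)Q(r)U^*(t,r)\,dr$ together with the uniform ellipticity of $Q(r)$ from Hypothesis \ref{hyp1:coeffB}, and to reduce matters to a quadratic-form lower bound $\langle Q(t,s)x,x\rangle\ge c(t,s)|x|^2$. Since $Q(t,s)$ is symmetric and positive, such a bound immediately gives $\norma{Q(t,s)^{-1}}\le 1/c(t,s)$, and by the spectral theorem $\norma{Q(t,s)^{-1/2}}=\norma{Q(t,s)^{-1}}^{1/2}$, yielding both \eqref{eq1:stimaQ1} and \eqref{eq1:stimaQ2} simultaneously.

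The first step is to test $Q(t,s)$ against an arbitrary vector $x\in\RN$: using the symmetry of $Q(r)$ and Hypothesis \ref{hyp1:coeffB},
\[
\prodscal{Q(t,s)x,x}=\int_s^t\prodscal{Q(r)U^*(t,r)x,U^*(t,r)x}dr\ge C\int_s^t|U^*(t,r)x|^2\,dr.
\]
The second step is to produce a lower bound on $|U^*(t,r)x|$ from an upper bound on $U(r,t)=U(t,r)^{-1}$. Writing $x=U(r,t)^*U^*(t,r)x$ and taking norms, one gets $|U^*(t,r)x|\ge\norma{U(r,t)}^{-1}|x|$, and Lemma \ref{lem:stimaU} (in the form \eqref{eq1:stimaU2} applied with the roles of $s$ and $t$ interchanged) gives $\norma{U(r,t)}\le Me^{\omega(t-r)}$. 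Hence
\[
|U^*(t,r)x|^2\ge M^{-2}e^{-2\omega(t-r)}|x|^2,\qquad s\le r\le t.
\]

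The third step is to integrate and control the exponent uniformly on $[s,t]$. Because $-2\omega(t-r)\ge 2\omega_{-}(t-s)$ for every $r\in[s,t]$ (both when $\omega\ge 0$, where $\omega_{-}=0$, and when $\omega<0$, where the minimum of $-2\omega(t-r)$ is attained at $r=s$), one obtains
\[
\prodscal{Q(t,s)x,x}\ge CM^{-2}(t-s)e^{2\omega_{-}(t-s)}|x|^2.
\]
Setting $H=M/\sqrt{C}$ then yields \eqref{eq1:stimaQ2}, and taking square roots gives \eqref{eq1:stimaQ1}. There is no real obstacle here: the only subtlety is keeping track of the sign of $\omega$ when passing from the pointwise exponential bound to a single uniform factor $e^{2\omega_{-}(t-s)}$, which is precisely why $\omega_{-}=\min\{0,\omega\}$ is introduced.
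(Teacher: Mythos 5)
Your argument is the same as the paper's: lower-bound the quadratic form $\prodscal{Q(t,s)x,x}=\int_s^t\abs{(Q(r))^{1/2}U^*(t,r)x}^2\,dr$ using the uniform ellipticity of $Q(r)$ and the bound on $\norma{U(r,t)}=\norma{U(t,r)^{-1}}$ from Lemma \ref{lem:stimaU}, then pass to the inverse; the constant $H=M/\sqrt{C}$ is exactly the paper's. The one flaw is a sign slip in quoting \eqref{eq1:stimaU2}: for $r\le t$ it gives $\norma{U(r,t)}\le Me^{-\omega(t-r)}$, not $Me^{\omega(t-r)}$, so the pointwise bound should read $\abs{U^*(t,r)x}^2\ge M^{-2}e^{2\omega(t-r)}\abs{x}^2$. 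As written, your uniformization step is false when $\omega>0$, since then $-2\omega(t-r)<0=2\omega_{-}(t-s)$ for $r<t$; with the corrected sign one needs $2\omega(t-r)\ge 2\omega_{-}(t-s)$, which does hold for every $r\in[s,t]$ in both cases ($\omega\ge0$ gives a nonnegative left-hand side, and $\omega<0$ gives $\omega(t-r)\ge\omega(t-s)$), after which the proof goes through verbatim.
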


\begin{proof}[Proof]
Of course, we can limit ourselves to proving \eqref{eq1:stimaQ1}, since \eqref{eq1:stimaQ2} is a straightforward consequence of \eqref{eq1:stimaQ1}. Fix $s,t\in\R$, with $s<t$. Observing that
\[
\prodscal{Q(t,s)x,x}=\int_s^t\abs{(Q^*(r))^{1/2}U^*(t,r)x}^2dr
\]
and taking Hypotheses \ref{hyp1:coeffA} and \ref{eq1:stimaU} into account, we get
\[
\frac{\prodscal{Q(t,s)x,x}}{t-s}\geq\frac{e^{2\omega\_(t-s)}}{H^2}\abs{x}^2,\quad\;\,x\in\R^N,
\]
where $\displaystyle H^2=\frac{M^2}{C}$. Since $\prodscal{Q(t,s)x,x}=\abs{(Q(t,s))^{1/2}x}^2$ for every $x\in\RN$, and $(Q(t,s))^{1/2}$ is invertible, we can estimate
\[
\frac{\abs{y}^2}{t-s}\geq H^{-2}e^{2\omega\_(t-s)}\abs{Q(t,s)^{-1/2}y}^2, \quad\;\, y\in\RN,
\]
which implies that
\[
\norma{Q(t,s)^{-1/2}}=\sup_{\abs{y}=1}\frac{\abs{Q(t,s)^{-1/2}y}}{\abs{y}}\leq\frac{He^{-\omega\_(t-s)}}{(t-s)^{1/2}}.
\]
\end{proof}

\subsection{Polynomial weight functions}
\label{subs:polweight}

In order to simplify the proof of the following results, we introduce some notation. For any $\rho,r>0$ we
set:
\begin{equation}
\label{eq1:notpol}
\begin{split}
J(\rho) & = M^2Q_\infty\left(\frac{1}{2\abs\omega}(1-\delta_\omega(0))+\rho\delta_\omega(0)\right), \\[1mm]
G(\rho) & = 2^{m-1}M^{2m}Q_\infty^m\left(\frac{1}{2^m\abs{\omega}^m}(1-\delta_0(\omega))+\rho^m\delta_0(\omega)\right), \\[1mm]
K(\rho,r) & = 2^{2m-1}\left(W(\rho)^{2m}+M^{2m}r^{2m}\right), \\[1mm]
H(\rho,r) & =W(\rho)+M(r+1).
\end{split}
\end{equation}

\begin{thm}
\label{teo1:stimacontsppesopolx}
For every $s,t\in\R$, with $s<t$, $\OU$ is a bounded linear operator from $\sppeso$ into itself. Moreover, there exist two positive constants $C_{0,0}$, $\omega_{0,0}$ such that
\begin{equation}
\label{eq1:stimacontpolx}
\norma{\OU f}_{\sppeso}\leq C_{0,0}e^{\omega_{0,0}(t-s)}\norma{f}_{\sppeso},\quad f\in\sppeso,
\end{equation}
for any $f\in C_p(\RN)$ and any $s,t$ as above.
\end{thm}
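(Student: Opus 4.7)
The plan is to bound $|\OU f(x)|$ pointwise from the integral representation \eqref{OU-intro} and then divide by $p(x)=1+|x|^{2m}$. Since $f\in\sppeso$ satisfies $|f(y)|\le\|f\|_{\sppeso}(1+|y|^{2m})$ and $\mathcal{N}_{a(t,s,x),Q(t,s)}$ is a probability measure, one immediately gets
\[
|\OU f(x)|\le\|f\|_{\sppeso}\Bigl(1+\int_{\RN}|y|^{2m}\mathcal{N}_{a(t,s,x),Q(t,s)}(dy)\Bigr).
\]
The problem thus reduces to showing that the $2m$-th moment on the right-hand side grows at most like $C(1+|x|^{2m})e^{\omega_{0,0}(t-s)}$.

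I would evaluate the moment by substituting $y=a(t,s,x)+Q(t,s)^{1/2}z$ (with $z\sim\mathcal{N}_I$) and splitting via the elementary convexity bound $|y|^{2m}\le 2^{2m-1}(|a(t,s,x)|^{2m}+|Q(t,s)^{1/2}z|^{2m})$ into a deterministic piece and a centered Gaussian piece. For the deterministic piece, Lemma \ref{lem:stimaU} together with the estimate on $g(t,s)$ stated right after it give $|a(t,s,x)|\le Me^{-\omega(t-s)}|x|+W(t-s)e^{-\omega_-(t-s)}$, so
\[
|a(t,s,x)|^{2m}\le K(t-s,|x|)e^{-2m\omega_-(t-s)}
\]
in the notation \eqref{eq1:notpol}. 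For the Gaussian piece, the bound $|Q(t,s)^{1/2}z|^{2m}=\langle Q(t,s)z,z\rangle^m\le\|Q(t,s)\|^m|z|^{2m}$, Hypotheses \ref{hyp1:coeffA}--\ref{hyp1:coeffB}, and Lemma \ref{lem:stimaU} yield
\[
\|Q(t,s)\|\le M^2 Q_\infty\int_s^t e^{-2\omega(t-r)}dr\le J(t-s)e^{-2\omega_-(t-s)},
\]
where the last inequality requires a case analysis on the sign of $\omega$ (the cases $\omega>0$, $\omega=0$, $\omega<0$ match exactly the two branches in the definition of $J$); combining this with the finite constant $c_{m,N}=\int_{\RN}|z|^{2m}\mathcal{N}_I(dz)$ completes the Gaussian piece to a bound of order $G(t-s)e^{-2m\omega_-(t-s)}$.

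Assembling these estimates and dividing by $1+|x|^{2m}$, the $|x|^{2m}$-contribution coming from $K(t-s,|x|)=2^{2m-1}(W(t-s)^{2m}+M^{2m}|x|^{2m})$ is absorbed by the denominator, and there remains a prefactor depending only on $t-s$, which is constant when $\omega\ne 0$, polynomial in $t-s$ when $\omega=0$ (through $W$ and $J$), multiplied by $e^{-2m\omega_-(t-s)}$. Choosing $\omega_{0,0}>0$ large enough for $e^{\omega_{0,0}(t-s)}$ to dominate this product produces \eqref{eq1:stimacontpolx}. Linearity of $\OU$ is transparent from the integral definition, and the continuity of $\OU f$ (so that $\OU f\in\sppeso$ rather than merely $\OU f/p\in L^\infty$) follows from dominated convergence applied to the representation \eqref{OU-intro}, using the polynomial growth of $f$, the continuity of $x\mapsto a(t,s,x)$, and the uniformity of the Gaussian tail.

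The main obstacle is bookkeeping rather than a conceptual difficulty: one must carefully separate the three cases $\omega>0$, $\omega=0$, $\omega<0$ in bounding both $\|Q(t,s)\|$ and $|g(t,s)|$, and then pick $\omega_{0,0}$ large enough so that the exponential $e^{\omega_{0,0}(t-s)}$ absorbs any polynomial factors $(t-s)^j$ appearing in the degenerate case $\omega=0$.
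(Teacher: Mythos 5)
Your proposal is correct and follows essentially the same route as the paper: bound $|f|$ by $\|f\|_{\sppeso}(1+|y|^{2m})$, reduce to estimating the $2m$-th moment of the Gaussian, split off the mean $a(s,t,x)$ via the convexity inequality, control the centered moment through the eigenvalues/norm of $Q(t,s)$ using Lemma \ref{lem:stimaU} and Hypotheses \ref{hyp1:coeffA}--\ref{hyp1:coeffB}, absorb the $|x|^{2m}$ contribution into the weight, and obtain continuity by dominated convergence. The only cosmetic difference is that the paper keeps the Gaussian centered and splits $|y+U(t,s)x+g(t,s)|^{2m}$ directly rather than substituting $y=a+Q(t,s)^{1/2}z$; the resulting bookkeeping, including the case analysis on the sign of $\omega$ encoded in $J$, $G$, $W$, $K$, is identical.
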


\begin{proof}[Proof]
We will prove that there exist a positive function $r\mapsto C(r)$, growing at most polynomially as $r\rightarrow+\infty$, and a constant $\gamma\geq0$ such that
\begin{equation}
\norma{\OU f}_{\sppeso}\leq C(t-s)e^{\gamma(t-s)}\norma{f}_{\sppeso},\quad f\in\sppeso,
\label{star}
\end{equation}
for any $s,t\in\R$, with $s<t$.
Estimate \eqref{eq1:stimacontpolx} will be an immediate consequence of this inequality.

Fix $s,t\in\R$, with $s<t$.
For any $f\in\sppeso$ and any $x\in\R^N$, $\OU f(x)\in\R$, since $\abs f$ can be estimated from above by a polynomial function, which is integrable with respect to the Gaussian measure.
Let us prove that $\OU f/p$ is bounded for any $f\in\sppeso$. For this purpose, we observe that
\begin{equation}
\label{eq1:eq1polweight}
\begin{split}
\left |\frac{\OU f(x)}{p(x)}\right |
& \leq \int_{\RN}\frac{|f(y+U(t,s)x+g(t,s))|}{\polx}\mathcal{N}_{Q(t,s)}(dy)\\
& \leq \norma{f}_{\sppeso}\int_{\RN}\frac{1+\abs{y+U(t,s)x+g(t,s)}^{2m}}{\polx}\mathcal{N}_{Q(t,s)}(dy)\\
& \leq 2^{2m-1}\norma{f}_{\sppeso}\left(\int_{\RN}\frac{\abs{y+g(t,s)}^{2m}}{\polx}\mathcal{N}_{Q(t,s)}(dy)\right. \\
& \quad\qquad\qquad\qquad\qquad\quad +\left.\int_{\RN}\frac{1+\abs{U(t,s)x}^{2m}}{\polx}\mathcal{N}_{Q(t,s)}(dy)\right)\\
& =: 2^{2m-1}\norma{f}_{\sppeso}(I_1+I_2),
\end{split}
\end{equation}
for any $x\in\R^N$, where we have taken advantage of the inequality $(a+b)^n\leq2^{n-1}\left(a^n+b^n\right)$, which holds for every $a,b\geq0$, $n\in\N$. Observe that
\[
\begin{split}
I_1(s,t,x)
& \leq\frac{2^{2m-1}}{\polx}\int_{\RN}\abs{y}^{2m}\mathcal{N}_{Q(t,s)}(dy)+
\frac{2^{2m-1}\abs{g(t,s)}^{2m}}{\polx}\int_{\RN}\mathcal{N}_{Q(t,s)}(dy)\\[2mm]
& \leq 2^{2m-1}\left(C_{2m}(t,s)+W(t-s)^{2m}e^{-2m\omega\_(t-s)}\right)
\end{split}
\]
and
\[
\begin{split}
I_2(s,t,x) & \leq\frac{1+\abs{U(t,s)}^{2m}\abs{x}^{2m}}{\polx} \leq\frac{1+M^{2m}e^{-2m\omega(t-s)}\abs{x}^{2m}}{\polx}\leq M^{2m}e^{-2m\omega\_(t-s)},
\end{split}
\]
for any $x\in\RN$, where
$$
C_k(t,s):=\int_{\RN}\abs{y}^{k}\mathcal{N}_{Q(t,s)}(dy),
$$
$M$ is given by Lemma \ref{lem:stimaU} and $W(t-s)$ is given by \eqref{eq1:stimag1}. To compute $C_{2m}(t,s)$, we need to estimate the eigenvalues of the matrix $Q(t,s)$, which is symmetric and positive definite.
Let $\lambda_i(t,s)$ ($i\in \{1,\ldots,N\}$) denote the eigenvalues of the matrix $Q(t,s)$; we have
\[
\lambda_i(t,s)\leq J(t-s)e^{-2\omega\_(t-s)},
\]
and, consequently,
\[
C_{2m}(t,s)\leq c G(t-s)e^{-2m\omega\_(t-s)},
\]
where $J(t-s)$ and $G(t-s)$ have been defined in \eqref{eq1:notpol}, and $c$ is a positive constant. It thus follows that $\OU f/p$ is bounded in $\R^N$ and
its sup-norm can be estimated by the right-hand side of \eqref{star}, where
we can take
\begin{eqnarray*}
C(t-s) = 2^{2m-1}\left(M^{2m}+W(t-s)+c G(t-s)\right).
\end{eqnarray*}

Finally, the continuity of $\OU f$ in $\R^N$ follows from applying the dominated convergence theorem. Hence, $\OU f\in\sppeso$, and the proof is complete.
\end{proof}

The following proposition shows some smoothing properties of the evolution operator $\OU$.

\begin{prop}
\label{prop1:derOU}
Let $s,t\in\R$ be such that $s<t$. If $f\in C_p(\RN)$, then $\OU f\in C^1(\RN)$.
\end{prop}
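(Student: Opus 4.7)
The plan is to use the explicit Gaussian density representation of $\OU f$ from \eqref{eq1:asscontgaussleb} and to differentiate under the integral sign with respect to $x$. Since $Q(t,s)>0$ by Hypothesis \ref{hyp1:coeffB}, the measure $\mathcal N_{a(t,s,x),Q(t,s)}$ admits the smooth density provided by \eqref{eq1:asscontgaussleb}, so we can write
\[
\OU f(x)=\int_{\RN}f(y)F(y-a(t,s,x))dy,\qquad a(t,s,x)=U(t,s)x+g(t,s),
\]
where $F$ denotes the centred Gaussian density with covariance $Q(t,s)$. Using $\nabla F(z)=-Q(t,s)^{-1}z\,F(z)$ and the chain rule, $x\mapsto F(y-a(t,s,x))$ is smooth and
\[
\partial_{x_i}F(y-a(t,s,x))=\langle Q(t,s)^{-1}(y-a(t,s,x)),U(t,s)e_i\rangle F(y-a(t,s,x)).
\]

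Next, I would justify differentiation under the integral sign via the dominated convergence theorem. Fix $x_0\in\RN$ and $R>0$. For $|x-x_0|\le R$ the vector $a(t,s,x)$ stays in a bounded subset of $\RN$; combining the polynomial growth estimate $|f(y)|\le\norma{f}_{\sppeso}(1+\abs y^{2m})$, the linear growth in $y$ of the factor $Q(t,s)^{-1}(y-a(t,s,x))$, and the fact that $F(y-a(t,s,x))$ decays faster than any polynomial in $y$ uniformly for $|x-x_0|\le R$, one obtains an $L^1(\RN)$ function dominating both the integrand and its $x_i$-derivative, as well as all the difference quotients $h^{-1}[F(y-a(t,s,x+he_i))-F(y-a(t,s,x))]f(y)$ for $|h|\le 1$ (using the mean value theorem on the segment joining $x$ to $x+he_i$). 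Passing to the limit $h\to 0$ then yields
\[
\partial_{x_i}\OU f(x)=\int_{\RN}f(y)\langle Q(t,s)^{-1}(y-a(t,s,x)),U(t,s)e_i\rangle F(y-a(t,s,x))dy.
\]

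Finally, the continuity of $x\mapsto\partial_{x_i}\OU f(x)$ at an arbitrary point $x_0$ follows from a further application of the dominated convergence theorem, using the same type of dominating function valid in a bounded neighbourhood of $x_0$. The main technical obstacle is precisely the construction of this common $L^1$ majorant: one must simultaneously absorb the polynomial growth of $f$ in $y$, the extra factor $|Q(t,s)^{-1}(y-a(t,s,x))|$ growing linearly in $y$, and the variation of the Gaussian centre $a(t,s,x)$ as $x$ moves, exploiting that translating $F$ by a point in a bounded set preserves its super-polynomial decay in $y$.
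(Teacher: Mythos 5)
Your proposal is correct and follows essentially the same route as the paper: differentiate the explicit Gaussian density under the integral sign, using the formula $\partial_{x_i}F(y-a(t,s,x))=\langle Q(t,s)^{-1}(y-a(t,s,x)),U(t,s)e_i\rangle F(y-a(t,s,x))$, and justify the exchange of limit and integral by exhibiting an $L^1$ majorant, uniform for $x$ in a ball $B(x_0,1)$, that absorbs the polynomial growth of $f$, the linear factor $|Q(t,s)^{-1}(y-a(t,s,x))|$, and the bounded translation of the Gaussian centre. The paper's proof does exactly this, writing the dominating function $\phi$ explicitly (splitting according to whether $|y|$ exceeds a threshold depending on $H(t-s,r)$), while you describe its construction and additionally make explicit the mean-value-theorem control of the difference quotients.
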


\begin{proof}
The claim follows from applying the dominated convergence theorem, observing that, for any $f\in C_p(\R^N)$ and any $x_0\in\RN$,
\[
\bigabs{\frac{\partial F}{\partial x_i}(x,y)f(y)}\leq \phi(y), \quad y\in\RN,\;\,x\in B(x_0,1),\;\, i\in \{1,\ldots,N\},
\]
where $F$ is the function in \eqref{eq1:asscontgaussleb} and
\[
\phi(y)=
\begin{cases}
\displaystyle\tilde \phi(y), & \abs{y}<H(t-s,r)e^{-\omega\_(t-s)},\\[2mm]
\tilde \phi(y)e^{-\frac{1}{2}\norma{Q(t,s)^{-1}}\left(\abs{y}-H(t-s,r)\right)^2},
& \abs{y}\geq H(t-s,r)e^{-\omega\_(t-s)},
\end{cases}
\]
with
\[
\tilde \phi(y)=\frac{Me^{-\omega(t-s)}(1+\abs{y}^{2m})\norma f_{\sppeso}}{\sqrt{(2\pi)^N}{\rm det}\,(Q(t,s))^{1/2}}\norma{Q(t,s)^{-1}}
\left(\abs{y}+H(t-s,r)e^{-\omega\_(t-s)}\right),
\]
and $\abs{x_0}=r$. Indeed, a straightforward computation reveals that $\phi\in L^1(\R^N)$.
\end{proof}

We now prove that, for any $s<t$, $\OU$ is a bounded operator mapping $\sppesoalpha$ into $\sppesoteta$ for $\alpha,\theta=0,1,2,3$ such that $\alpha\leq \theta$.
We will get an estimate similar to \eqref{eq1:stimacontpolx} for the operator norm $\OU$ in these spaces; however, when $\alpha<\theta$, this estimate has a singularity in $s=t$, and its order depends on difference between $\alpha$ and $\theta$.

\begin{thm}
\label{teo1:stimasppesospkpolx}
For every $s,t\in\R$, with $s<t$, and every $f\in\sppeso$, $\OU f\in C_p^3(\RN)$. Moreover, for every $\alpha,\theta=0,1,2,3$ with $\alpha\leq \theta$, there exist two positive constants $C_{\alpha,\theta}>0$ and $\omega_{\alpha,\theta}>0$ such that
\begin{equation}
\label{eq1:stimaOUsppesosppesokpolx}
\norma{\OU f}_{\sppesoteta}\leq \frac{C_{\alpha,\theta}e^{\omega_{\alpha,\theta}(t-s)}}{(t-s)^{(\alpha-\theta)/2}}\norma{f}_{\sppesoalpha},
\end{equation}
for any $f\in\sppesoalpha$ and any $t>s$.
\end{thm}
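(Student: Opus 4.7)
My plan is to prove \eqref{eq1:stimaOUsppesosppesokpolx} in two stages. First I would establish the base cases $\alpha=0$, $\theta\in\{1,2,3\}$ by differentiating the representation formula for $\OU$ under the integral sign; then I would reduce the general case $1\le\alpha\le\theta\le 3$ to the base cases via a commutation identity between $D$ and $\OU$ that transfers $\alpha$ derivatives from the Gaussian kernel onto $f$. The case $\alpha=\theta=0$ is already Theorem \ref{teo1:stimacontsppesopolx}. The singular factor $(t-s)^{-(\theta-\alpha)/2}$ should arise only from the $(\theta-\alpha)$ derivatives falling on the kernel, each contributing a factor controlled by $\norma{Q(t,s)^{-1/2}}\norma{U(t,s)}=O((t-s)^{-1/2})$ through Lemmas \ref{lem:stimaU} and \ref{lemma1:maggQ(t,s)}.

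For the base cases I would write
\[
\OU f(x)=\int_{\RN}f(y)F(x,y)\,dy,
\]
with $F$ the Gaussian density of mean $a(t,s,x)=U(t,s)x+g(t,s)$ and covariance $Q(t,s)$, and exploit the identity $D_{x_i}F(x,y)=F(x,y)\prodscal{Q(t,s)^{-1}(y-a(t,s,x)),U(t,s)e_i}$. Differentiating $|\beta|\le 3$ times and then substituting $y\mapsto y+a(t,s,x)$ yields
\[
D^\beta\OU f(x)=\int_{\RN}f(y+a(t,s,x))\,\Pi_\beta(y)\,\mathcal N_{Q(t,s)}(dy),
\]
where $\Pi_\beta$ is a polynomial in $y$ of degree at most $|\beta|$ whose coefficients are products of at most $|\beta|$ factors built from $Q(t,s)^{-1}U(t,s)e_i$. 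A Cauchy--Schwarz in $L^2(\mathcal N_{Q(t,s)})$ bounds each linear contribution $\prodscal{Q(t,s)^{-1}y,U(t,s)e_i}$ by $\norma{Q(t,s)^{-1/2}}\norma{U(t,s)}$, accounting for the $(t-s)^{-|\beta|/2}$. The remaining moment of $(1+|y+a(t,s,x)|^{2m})$, once divided by $p(x)$, is handled exactly as in the proof of Theorem \ref{teo1:stimacontsppesopolx}: expand with $(a+b)^n\le 2^{n-1}(a^n+b^n)$, bound $|U(t,s)x|^{2m}/p(x)\le\norma{U(t,s)}^{2m}$ together with polynomial bounds on $|g(t,s)|$ and on the eigenvalues of $Q(t,s)$, and absorb all residual $t-s$-growth into the exponential. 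The $C^3$-regularity of $\OU f$ follows from a dominated-convergence argument in the spirit of Proposition \ref{prop1:derOU}.

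For $1\le\alpha\le\theta$, the starting point is the commutation identity obtained by differentiating $f(y+a(t,s,x))$ directly in $x$,
\[
D_{x_i}\OU f(x)=\sum_{j=1}^N (U(t,s))_{ji}\,\OU(D_jf)(x),\qquad f\in C^1_p(\RN).
\]
Iterating $\alpha$ times gives, for $|\beta|=\alpha$, $D^\beta\OU f=\sum_{|\gamma|=\alpha}c_{\beta\gamma}(U(t,s))\,\OU(D^\gamma f)$ with $|c_{\beta\gamma}|\le\norma{U(t,s)}^\alpha\le M^\alpha e^{-\omega\alpha(t-s)}$. When $\theta=\alpha$ I would invoke Theorem \ref{teo1:stimacontsppesopolx} on each $\OU(D^\gamma f)$; when $\theta>\alpha$ I would apply an additional $D^{\beta'}$ with $|\beta'|=\theta-\alpha$ and invoke the base-case estimate from the previous paragraph applied to $D^\gamma f\in\sppeso$, using $\norma{D^\gamma f}_{\sppeso}\le\norma{f}_{\sppesoalpha}$. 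Summing over multi-indices of order at most $\theta$ yields \eqref{eq1:stimaOUsppesosppesokpolx}.

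The main technical obstacle I foresee is the bookkeeping of polynomial weights against Gaussian moments of growing degree: after $k$ differentiations of the kernel the integrand is a polynomial in $y$ of degree $2m+k$ with $x$-dependent coefficients, and its norms against $\mathcal N_{Q(t,s)}$ involve eigenvalues of $Q(t,s)$ whose size grows with $t-s$. Isolating the singular prefactor $(t-s)^{-(\theta-\alpha)/2}$ from this residual polynomial-in-$(t-s)$ growth, and absorbing the latter only into $e^{\omega_{\alpha,\theta}(t-s)}$, is technically delicate but follows the same expansions already used in the proof of Theorem \ref{teo1:stimacontsppesopolx}; by contrast, the commutation stage is purely algebraic and introduces no new singular factors.
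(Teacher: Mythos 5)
Your proposal is correct and shares the paper's overall architecture: a singular base estimate obtained by differentiating the Gaussian kernel (controlled through Lemmas \ref{lem:stimaU} and \ref{lemma1:maggQ(t,s)}), combined with the commutation identity $D_{x_i}\OU f=\sum_j (U(t,s))_{ji}\OU (D_jf)$ to pass $\alpha$ derivatives onto $f$ so that only $\theta-\alpha$ fall on the kernel; the weight bookkeeping is handled exactly as in Theorem \ref{teo1:stimacontsppesopolx} in both arguments. The one genuine divergence is in the base cases $\alpha=0$, $\theta=2,3$. You differentiate the kernel two and three times directly, producing a Hermite-type polynomial $\Pi_\beta$ (note it contains, besides the top-degree product of linear factors $\prodscal{Q(t,s)^{-1}y,U(t,s)e_i}$, lower-degree terms with constant factors $\prodscal{Q(t,s)^{-1}U(t,s)e_j,U(t,s)e_i}$ of order $(t-s)^{-1}$; these still respect the overall $(t-s)^{-|\beta|/2}$ count, so your estimate survives, but the description of $\Pi_\beta$ should account for them). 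The paper instead never differentiates the kernel more than once: it writes $\OU=P_{s,s_1}P_{s_1,t}$ with $s_1=(t+s)/2$, applies the first-order result to $g_1=P_{s_1,t}f$, and uses the commutation identity on $P_{s,s_1}$ to bootstrap $C_p\to C_p^1\to C_p^2\to C_p^3$, each half-interval contributing a factor $((t-s)/2)^{-1/2}$. The splitting argument reuses the $\theta=1$ estimate as a black box and avoids all Hermite-polynomial combinatorics; your direct computation is heavier on Gaussian-moment bookkeeping but is self-contained, yields the singularity in one shot, and extends verbatim to any order of differentiation without iterated compositions. Either route closes the proof.
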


\begin{proof}[Proof]
As in Theorem \ref{teo1:stimacontsppesopolx}, we will prove that there exist a positive function $r\mapsto q_{\alpha,\theta}(r)$, growing at most polynomially as $\abs r$ tends to infinity, and a constant $\gamma_{\alpha,\theta}>0$, such that
\[
\norma{\OU f}_{\sppesoteta}\leq \frac{q_{\alpha,\theta}(t-s)e^{\gamma_{\alpha,\theta}(t-s)}}{(t-s)^{(\theta-\alpha)/2}}\norma{f}_{\sppesoalpha}, \quad \alpha,\theta=0,1,2,3, \quad \alpha\leq\theta,
\]
for every $f\in\sppesoalpha$ and any $s,t\in\R$ such that $s<t$.

At first, we prove the assertion when $\alpha=0$ and $\theta=1,2,3$. Let us fix $s,t\in\R$, with $s<t$, $\theta=1$ and $f\in\sppeso$. By Proposition \ref{prop1:derOU}, we know that $\OU f\in C^1(\R^N)$. Moreover,
\[
\begin{split}
&\left |\frac{D_{x_i}\OU f(x)}{p(x)}\right | \\[1mm]
& \leq \frac{\norma{f}_{\sppeso}}{\polx}\int_{\RN}(\poly)\abs{\prodscal{U^i(t,s),Q(t,s)^{-1}(y-a(s,t,x))}}\mathcal{N}_{a(s,t,x),Q(t,s)}(dy) \\[1mm]
& \leq \frac{MHe^{-2\omega\_(t-s)}\norma{f}_{\sppeso}}{(t-s)^{1/2}(2\pi)^{N/2}(\polx)}
\int_{\RN}(1+\abs{Q(t,s)^{1/2}z+a(s,t,x)}^{2m})\abs{z}e^{-\frac{1}{2}\abs{z}^2}dz,
\end{split}
\]
for any $x\in\R^N$, where we have used the change of variables $y-a(s,t,x)=Q(t,s)^{1/2}z$, and we have taken advantage of \eqref{eq1:stimaQ1}.
Note that
\[
\begin{split}
\int_{\RN}(1+\abs{Q(t,s)^{1/2}z+ & a(s,t,x)}^{2m})\abs{z}e^{-\frac{1}{2}\abs{z}^2}dz \\
& \leq \int_{\RN}\abs{z}e^{-\frac{1}{2}\abs{z}^2}dz+2^{2m-1}\int_{\RN}\abs{Q(t,s)^{1/2}z}^{2m}\abs{z}e^{-\frac{1}{2}\abs{z}^2}dz \\
& \quad +2^{2m-1}\int_{\RN}\abs{a(s,t,x)}^{2m}\abs{z}e^{-\frac{1}{2}\abs{z}^2}dz \\
& \leq c(N,m)\left(J(t-s)^m+K(t-s,\abs{x})\right)e^{-2m\omega\_(t-s)},
\end{split}
\]
for any $x\in\RN$, and some positive constant $c(N,m)$, where $J(\rho),K(\rho,r)$ have been defined in \eqref{eq1:notpol}, and $\omega\_=\min\{0,\omega\}$. Hence,
\begin{eqnarray*}
\norma{\OU f}_{ C_p^1(\mathbb{R}^N)}\leq q_{1,0}(t-s)\frac{e^{-(2m+2)\omega\_(t-s)}}{(t-s)^{1/2}}\norma{f}_{\sppeso}.
\end{eqnarray*}

To show that $\OU f$ admits second- and third-order derivatives which are continuous in $\R^N$, and to estimate their norms, we use the basic property of evolution operators.
We first prove \eqref{eq1:stimaOUsppesosppesokpolx} with $\theta=2$. For this purpose, we set $s_1=\displaystyle\frac{t+s}{2}$ and $g_1=P_{s_1,t}f$, and observe that
$$
\OU f=P_{s,s_1}P_{s_1,t}f=P_{s,s_1}g_1.
$$
Since we have already proved that $g_1\in C_p^1(\RN)$, we have
\[
\begin{split}
D_{x_j}\OU f(x)=
D_{x_j}P_{s,s_1}g_1(x)
& = \prodscal{U(s_1,s)^j,P_{s,s_1}\nabla g_1(x)},
\end{split}
\]
for any $j\in \{1,\ldots,N\}$ and $x\in\RN$, where
\[
\OU \nabla \psi(x)=\left(\OU D_{x_1}\psi(x),\ldots,\OU D_{x_N}\psi(x)\right), \quad \psi\in C_p^1(\RN).
\]

From the above results we can infer that $P_{s,s_1}D_{x_k} g_1\in C_p^1(\R^N)$ for any $k=1,\ldots,N$. Hence $\OU f\in C_p^2(\R^N)$ and, for any $i,j\in \{1,\ldots,N\}$, we have
\[
\begin{split}
\norma{D_{x_ix_j}\OU f}_{ C_p(\RN)}
& = \norma{D_{x_ix_j}P_{s,s_1}g_1}_{ C_p(\RN)} \\
& \leq \norma{D_{x_j}P_{s,s_1}g_1}_{ C_p^1(\RN)} \\
& \leq \norma{\prodscal{U(s_1,s)^j,P_{s,s_1}\nabla g_1}}_{ C_p^1(\RN)} \\
& \leq Me^{-\omega(s_1-s)}\sum _{k=1}^N\norma{P_{s,s_1}D_k g_1}_{ C_p^1(\RN)} \\
& \leq \frac{q_{2,0}(t-s)e^{\theta_{2,0}(t-s)}}{t-s}\norma{f}_{\sppeso},
\end{split}
\]
for some positive function $r\mapsto q_{2,0}(r)$, growing at most polynomially at infinity, and $\theta_{2,0}>0$.

Finally, we prove \eqref{eq1:stimaOUsppesosppesokpolx} with $\theta=3$. As above, we set $s_1=\displaystyle\frac{t+s}{2}$ and $g_1=P_{s_1,t}f$; since $g_1\in C_p^1(\R^N)$, we get
\[
\norma{D_{x_ix_jx_k}\OU f}_{\sppeso}
\leq \norma{D_{x_k}\OU f}_{ C_p^2(\RN)}\leq\norma{D_{x_k}P_{s,s_1}g_1}_{ C_p^2(\RN)}.
\]

Repeating the same computations as above, we obtain estimate \eqref{eq1:stimaOUsppesosppesokpolx} for some positive function $r\mapsto q_{3,0}(r)$, growing at most polynomially at infinity, and some positive constant $\theta_{3,0}$.

The continuity of $\OU f$ and its derivatives, follows from the dominated convergence theorem.

If $1\leq \alpha\leq3$, the proof is similar, with the only difference that the smoothness of $f$ allows us to differentiate only $(\theta-\alpha)$-times the kernel $F$ of the Gaussian measure,
and this fact yields a lower order of singularity around $s=t$ of the function $D^{\beta}\OU f$ for $|\beta|\le 3$.
\end{proof}

\begin{rmk}
The arguments in the proof of Theorem \ref{teo1:stimasppesospkpolx} can be used to show that, in fact, $\OU f\in\sppesoteta$ for any $\theta\in\N$, any $s,t\in\R$, with $s<t$, and any $f\in\sppeso$, and
to extend \eqref{eq1:stimaOUsppesosppesokpolx} to any $\alpha,\theta>0$, with $\alpha\le\theta$.
\end{rmk}

\subsection{Exponential weight functions}
\label{subs:expweight}

To begin with, we define
\begin{equation}
\label{eq1:notexp}
\begin{split}
\bar\lambda(t,s) & = \max\{\lambda_i(t,s):\textrm{$\lambda_i(t,s)$ is an eigenvalue of $Q(t,s)$, $i\in\{1,\ldots,N\}$}\},\\
\end{split}
\end{equation}
for any $s,t\in\R$, with $s<t$. Note that $\bar \lambda(t,s)\leq J(t-s)$,
where $J$ is defined in \eqref{eq1:notpol}.

We now comment on Hypothesis \ref{hyp-3}. As the following lemma shows, when we consider exponential weight functions, we are forced to assume that  $\norma{U(t,s)}\leq1$ for every $s<t$.

\begin{lem}
\label{lem1:CNexp}
If for some $s,t\in\R$, with $s<t$, we have $\norma {U(t,s)}>1$, then $\OU p\notin\sppeso$.
\end{lem}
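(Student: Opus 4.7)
The plan is to exhibit a unit direction $\xi_0\in\RN$ along which $\OU p(r\xi_0)/p(r\xi_0)$ diverges as $r\to+\infty$, which immediately gives $\OU p\notin\sppeso$. The natural choice is a unit vector $\xi_0$ realizing $\abs{U(t,s)\xi_0}=\norma{U(t,s)}>1$, so that after applying $U(t,s)$ the exponential weight grows strictly faster than along the original ray $r\xi_0$.

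First I would rewrite, via the translation $y\mapsto z=y-a(s,t,x)$ in the Gaussian kernel,
\[
\OU p(x)=\int_{\RN}e^{(1+\abs{a(s,t,x)+z}^2)^\gamma}\,\mathcal{N}_{Q(t,s)}(dz).
\]
To obtain a lower bound that is insensitive to the $x$-dependence of the Gaussian mass, I restrict the integral to the fixed ball $\{\abs{z}\le R\}$, on which $\abs{a(s,t,x)+z}\ge\abs{a(s,t,x)}-R$. This produces
\[
\OU p(x)\ge c_{s,t,R}\,\exp\!\Bigl((1+(\abs{a(s,t,x)}-R)_+^2)^\gamma\Bigr),
\]
where $c_{s,t,R}:=\mathcal{N}_{Q(t,s)}(\{\abs{z}\le R\})>0$ is independent of $x$ (positivity uses that $Q(t,s)$ is positive definite, cf.\ Hypothesis \ref{hyp1:coeffB} and Lemma \ref{lemma1:maggQ(t,s)}).

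Next, along $x_r:=r\xi_0$, the reverse triangle inequality together with the choice of $\xi_0$ yields $\abs{a(s,t,x_r)}\ge \norma{U(t,s)}\,r-\abs{g(t,s)}$. Hence, for any small $\epsilon>0$ and all sufficiently large $r$,
\[
\abs{a(s,t,x_r)}-R\ge (\norma{U(t,s)}-\epsilon)\,r.
\]
Dividing the lower bound on $\OU p(x_r)$ by $p(x_r)=e^{(1+r^2)^\gamma}$ then gives
\[
\frac{\OU p(x_r)}{p(x_r)}\ge c_{s,t,R}\,\exp\!\Bigl((1+(\norma{U(t,s)}-\epsilon)^2r^2)^\gamma-(1+r^2)^\gamma\Bigr).
\]

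Finally, since $\norma{U(t,s)}>1$, I fix $\epsilon$ so small that $A:=\norma{U(t,s)}-\epsilon>1$. The elementary asymptotic
\[
(1+A^2r^2)^\gamma-(1+r^2)^\gamma=r^{2\gamma}\bigl[(A^2+r^{-2})^\gamma-(1+r^{-2})^\gamma\bigr]\sim (A^{2\gamma}-1)\,r^{2\gamma},
\]
valid as $r\to+\infty$ because $\gamma>0$ and $A>1$, tends to $+\infty$. Therefore $\OU p(x_r)/p(x_r)\to+\infty$, which shows that $\OU p/p$ is unbounded on $\RN$, i.e.\ $\OU p\notin\sppeso$. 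I do not anticipate any substantial obstacle: the only subtle point is choosing a lower bound for the Gaussian integral uniformly in $x$ (handled by integrating over a fixed-radius ball around the mean), and everything else is direct comparison of the exponential growth rates.
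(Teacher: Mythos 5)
Your proof is correct. The overall strategy coincides with the paper's: pick a unit direction $\xi_0$ attaining $\abs{U(t,s)\xi_0}=\norma{U(t,s)}>1$ (the paper only needs $\abs{U(t,s)x(t,s)}>(1+\delta)\abs{x(t,s)}$) and show that $\OU p(r\xi_0)/p(r\xi_0)$ blows up as $r\to+\infty$ by comparing the exponents $(1+A^2r^2)^\gamma$ and $(1+r^2)^\gamma$ with $A>1$. Where you differ is in how the Gaussian integral is bounded from below, and your device is the cleaner one: after centering, you restrict to a fixed ball $\{\abs z\le R\}$, which gives a positive constant $c_{s,t,R}$ independent of $x$ and makes the argument uniform in $\gamma\in(0,1/2]$. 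The paper instead treats two cases: for $\gamma=1/2$ it keeps the integral over all of $\RN$ and bounds the density pointwise from below (producing the constant $\tilde c(t,s)$), while for $\gamma\in(0,1/2)$ it integrates over a ball $\{\abs y\le\eta\abs{U(t,s)x}\}$ whose radius grows with $\abs x$ and must then tune $\eta$ so that $(1-\eta)(1+\delta)>1$. Your single asymptotic $(1+A^2r^2)^\gamma-(1+r^2)^\gamma\sim(A^{2\gamma}-1)r^{2\gamma}$ absorbs both cases at once, at no loss of content; the paper's case split buys nothing here beyond making each exponent comparison completely explicit. All the auxiliary facts you invoke (attainment of the operator norm on the unit sphere, positive definiteness of $Q(t,s)$ guaranteeing $c_{s,t,R}>0$) are available from Hypothesis \ref{hyp1:coeffB} and Lemma \ref{lemma1:maggQ(t,s)}, so there is no gap.
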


\begin{proof}[Proof]
Obviously, $p\in\sppeso$. Moreover, since $\norma{U(t,s)}>1$, there exist $x(t,s)\in\partial B(0,1)$ and $\delta(t,s)>0$ such that
$$
\abs{U(t,s)x(t,s)}>(1+\delta(t,s))\abs{x(t,s)}.
$$
We consider separately the cases when $\gamma=1/2$, and $\gamma\in\left(0,1/2\right)$.

If $\gamma=1/2$, we have
\[
\OU p(x)
= \frac{1}{\sqrt{(2\pi)^N}({\rm det}\,Q(t,s))^{1/2}}\int_{\RN}e^{(1+\abs{y+U(t,s)x+g(t,s)}^2)^{1/2}} e^{-\frac{1}{2}\prodscal{Q(t,s)^{-1}y,y}}dy, \]
for any $x\in\R^N$.
Set $a(s,t,x)=U(t,s)x+g(t,s)$, and observe that
\[
\begin{split}
(1+\abs{y+a(s,t,x)}^2)^{1/2} & \geq \abs{U(t,s)x}-\abs{y}-\abs{g(t,s)}, \\[1mm]
\prodscal{Q(t,s)^{-1}y,y} & \leq \norma{Q(t,s)^{-1}}\abs{y}^2.
\end{split}
\]
Hence
\[
\OU p(x) \geq \tilde c(t,s)e^{\abs{U(t,s)x}-W(t-s)e^{-\omega(t-s)}},\quad\;\,\forall x\in\R^N,
\]
where
$$
\tilde c(t,s)=\frac{1}{\sqrt{(2\pi)^N}({\rm det}\,Q(t,s))^{1/2}}\int_{\RN}e^{-\abs{y}-\norma{Q(t,s)^{-1}}\abs{y}^2}dy.
$$
Therefore,
\[
\begin{split}
\sup_{x\in\RN}\frac{\OU p(x)}{p(x)}
& \geq \sup_{r>0}\frac{\OU p(rx(t,s))}{p(rx(t,s))} \\
& \geq \tilde c(t,s) \sup_{r>0}e^{-W(t-s)}e^{r\left((1+\delta(t,s))-\left(\frac{1}{r^2}+1\right)^{1/2}\right)} \\
& = +\infty.
\end{split}
\]

Let us consider the case when $\gamma\in\left(0,1/2\right)$; fix $0<\eta<1$ and $\abs{y}\leq\eta\abs{U(t,s)x}$. Then,
\begin{align*}
(1+\abs{y+U(t,s)x}^2)^{\gamma} \geq \left(1-\eta\right)^{2\gamma}\abs{U(t,s)x}^{2\gamma}.
\end{align*}

Set $B(s,t,x)=\{y\in\RN:\abs{y}\leq\eta\abs{U(t,s)x}\}$, then we have
\[
\OU p(x)
\geq e^{\left(1-\eta\right)^{2\gamma}\abs{U(t,s)x}^{2\gamma}}\int_{B(s,t,x)}\mathcal{N}_{g(t,s),Q(t,s)}(dy).
\]

Hence
\[
\begin{split}
\sup_{x\in\RN}\frac{\OU p(x)}{p(x)}
& \geq \sup_{r>0}\frac{\OU p(rx(t,s))}{p(rx(t,s))} \\
& \geq \sup_{r>0}e^{r^{2\gamma}\left[\left(\left(1-\eta\right)(1+\delta(t,s))\right)^{2\gamma}-\left(\frac{1}{r^2}+1\right)^{\gamma}\right]}
\int_{B(s,t,rx(t,s))}\mathcal{N}_{g(t,s),Q(t,s)}(dy) \\
& = +\infty,
\end{split}
\]
provided we choose $\eta$ such that $\displaystyle\left(1-\eta\right)(1+\delta(t,s))>1$.
\end{proof}

As the following theorem shows, Hypotheses \ref{hyp1:coeffA}, \ref{hyp1:coeffB} and \ref{hyp-3} guarantee that
$\OU$ is a linear bounded operator from $\sppeso$ into itself.

\begin{thm}
\label{teo1:stimacontsppesoexp}
$\OU$ is a linear bounded operator mapping $\sppeso$ into itself for every $s,t\in\R$, with $s<t$. Moreover, there exist two positive constants $C_{0,0}$, $\omega_{0,0}$ such that
\begin{equation}
\label{eq1:stimacontexp}
\norma{\OU f}_{\sppeso}\leq C_{0,0}e^{\omega_{0,0}(t-s)}\norma{f}_{\sppeso},\quad f\in\sppeso,
\end{equation}
for any $s,t\in\R$, with $s<t$.
\end{thm}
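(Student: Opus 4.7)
The plan is to follow the strategy of Theorem \ref{teo1:stimacontsppesopolx}, replacing polynomial bounds with exponential ones. The key new input is Hypothesis \ref{hyp-3}, which forces $\norma{U(t,s)}\leq e^{-\omega(t-s)}\leq 1$ for every $s<t$; this is precisely the control whose absence is shown by Lemma \ref{lem1:CNexp} to prevent $\OU$ from sending $\sppeso$ into itself. I would begin by noting that, under Hypothesis \ref{hyp-3}, $\omega\_=0$, so by \eqref{eq1:stimag1} and \eqref{eq1:notpol} both $\abs{g(t,s)}$ and $\bar\lambda(t,s)$ are uniformly bounded in $s<t$; this will make every term on the right-hand side independent of $s$ and $t$.

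Fix $s<t$ and $f\in\sppeso$. Since $\abs{f(y)}\leq\norma{f}_{\sppeso}e^{(1+\abs{y}^2)^\gamma}$ pointwise, the representation \eqref{OU-intro} gives
\begin{equation*}
\left|\frac{\OU f(x)}{p(x)}\right|\leq\norma{f}_{\sppeso}\int_{\R^N}e^{(1+\abs{y}^2)^\gamma-(1+\abs{x}^2)^\gamma}\,\mathcal{N}_{a(s,t,x),Q(t,s)}(dy),
\end{equation*}
and I would then perform the affine change of variables $y=a(s,t,x)+Q(t,s)^{1/2}z$, which reduces the above to a standard Gaussian integral in $z$. Hypothesis \ref{hyp-3} yields $\abs{a(s,t,x)}\leq\abs{x}+\abs{g(t,s)}$, and $\abs{Q(t,s)^{1/2}z}\leq\bar\lambda(t,s)^{1/2}\abs{z}$.

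The technical heart of the proof is the subadditivity inequality $(u+v)^\alpha\leq u^\alpha+v^\alpha$, valid for $u,v\geq 0$ and $\alpha\in(0,1]$. Since $\gamma\leq 1/2$ one has $2\gamma\in(0,1]$, so I would chain subadditivity with the trivial bound $1+\abs{y}^2\leq(1+\abs{y})^2$ to obtain
\begin{align*}
(1+\abs{y}^2)^\gamma
&\leq(1+\abs{y})^{2\gamma}\leq 1+\abs{y}^{2\gamma}\\
&\leq 1+\abs{a(s,t,x)}^{2\gamma}+\abs{Q(t,s)^{1/2}z}^{2\gamma}\\
&\leq 1+\abs{x}^{2\gamma}+\abs{g(t,s)}^{2\gamma}+\bar\lambda(t,s)^\gamma\abs{z}^{2\gamma}.
\end{align*}
Combined with the elementary lower bound $(1+\abs{x}^2)^\gamma\geq\abs{x}^{2\gamma}$, this will give $(1+\abs{y}^2)^\gamma-(1+\abs{x}^2)^\gamma\leq 1+\abs{g(t,s)}^{2\gamma}+\bar\lambda(t,s)^\gamma\abs{z}^{2\gamma}$ uniformly in $x\in\R^N$. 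Since $2\gamma<2$, the standard Gaussian integral $\int_{\R^N}e^{\bar\lambda(t,s)^\gamma\abs{z}^{2\gamma}}e^{-\abs{z}^2/2}\,dz$ is finite and, thanks to the uniform bound on $\bar\lambda(t,s)$, uniformly bounded in $s<t$. Together with the uniform bound on $\abs{g(t,s)}$ this produces \eqref{eq1:stimacontexp} with a constant independent of $t-s$. Continuity of $\OU f$ on $\R^N$ would then follow from dominated convergence, as in Theorem \ref{teo1:stimacontsppesopolx}.

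The main obstacle I anticipate is the bookkeeping of the subadditivity chain: the restriction $\gamma\leq 1/2$ is essential (it is what makes the exponent $2\gamma$ subadditive), and Hypothesis \ref{hyp-3} is essential (it lets one replace $\abs{U(t,s)x}$ by $\abs{x}$ so that the $\abs{x}^{2\gamma}$ contribution cancels against the lower bound of $(1+\abs{x}^2)^\gamma$). Together these two ingredients are exactly what makes the argument go through, which is consistent with Lemma \ref{lem1:CNexp} showing that the first cannot be weakened.
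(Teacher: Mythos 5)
Your proposal is correct and follows essentially the same route as the paper: the same affine change of variables reducing to a standard Gaussian, the same use of subadditivity of $r\mapsto r^{2\gamma}$ for $2\gamma\le 1$ to split $(1+\abs{y}^2)^\gamma$ into a sum of contributions from $x$, $g(t,s)$ and $z$, the same cancellation of $\abs{x}^{2\gamma}$ against $(1+\abs{x}^2)^\gamma$ via Hypothesis \ref{hyp-3}, and dominated convergence for continuity. Your additional observation that $\omega>0$ makes $W(t-s)$ and $\bar\lambda(t,s)$ uniformly bounded, so the constant can be taken independent of $t-s$, is a (correct) slight sharpening of the stated estimate.
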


\begin{proof}[Proof]
First of all, observe that $\OU f(x)\in\R$ for every $f\in C_p(\RN)$, every $s,t\in\R$, with $s<t$, and every $x\in\R^N$.
Fix $s,t\in\R$ with $s<t$. A straightforward computation, where we use the change of variables
$y-a(s,t,x)=\sqrt2 Q(t,s)^{-1/2}z$, yields
\begin{equation}
\left |\frac{\OU f(x)}{p(x)}\right | \leq
\frac{\norma{f}_{\sppeso}}{\expgammax\sqrt{\pi^N}}\int_{\RN}e^{(1+\abs{\sqrt2Q(t,s)^{1/2}z+U(t,s)x+g(t,s)}^2)^{\gamma}} e^{-\abs z^2}dz,
\label{A}
\end{equation}
for any $x\in\R^N$.
Since
\[
\begin{split}
(1+ & \abs{\sqrt 2Q(t,s)^{1/2}z+U(t,s)x+g(t,s)}^2)^{\gamma} \\
& \quad \leq 1+W(t-s)^{2\gamma}+\abs x^{2\gamma}+(2\bar\lambda(t,s))^{\gamma}\left(\abs{z_1}^{2\gamma}+\ldots+\abs{z_N}^{2\gamma}\right),
\end{split}
\]
for any $x,z\in\R^N$,
taking \eqref{A} into account we obtain
\[
\left|\frac{\OU f(x)}{p(x)}\right |\leq 2^Ne^{1+W(t,s)^{\gamma}+\frac{N}{4}(2\bar\lambda(t,s))^{2\gamma}}e^{2\bar\lambda(t,s)^{\gamma}}\norma{f}_{\sppeso}
e^{\abs x-(1+\abs x^2)^{1/2}},
\]
for any $x\in\RN$, where $\bar\lambda(t,s)$ has been defined in \eqref{eq1:notexp}. The continuity of the function $\OU f$ follows by a straightforward application of the dominated convergence theorem.
Estimate \eqref{eq1:stimacontexp} easily follows.
\end{proof}

As in polynomial case, $\OU$ has a regularizing property.

\begin{prop}
\label{prop1:regderOUexp1}
For any $f\in C_p(\RN)$ and $s,t\in\R$, with $s<t$, the function $\OU f$ belongs to $C^1(\R^N)$.
\end{prop}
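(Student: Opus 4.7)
The plan is to mimic the proof of Proposition \ref{prop1:derOU} from the polynomial case, applying the dominated convergence theorem inside the integral representation
\[
\OU f(x) = \int_{\RN} f(y) F(x,y)\, dy,
\]
where $F(x,y)$ is the density of the Gaussian measure $\mathcal{N}_{a(s,t,x),Q(t,s)}$ with respect to the Lebesgue measure, as in \eqref{eq1:asscontgaussleb}. The challenge in the exponential setting, compared to the polynomial one, is that $|f(y)|$ can grow like $e^{(1+|y|^2)^\gamma}$ with $\gamma \in (0,1/2]$, so we must verify that the Gaussian decay in $y$ still dominates this growth after differentiating the kernel.

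First I would compute $\partial_{x_i} F(x,y)$. A direct calculation gives
\[
\partial_{x_i} F(x,y) = F(x,y)\,\bigl\langle U(t,s)^i,\, Q(t,s)^{-1}(y - a(s,t,x))\bigr\rangle,
\]
which, using Lemma \ref{lem:stimaU} and \eqref{eq1:stimaQ2}, can be bounded in absolute value by
\[
|\partial_{x_i} F(x,y)| \leq C(t,s)\,F(x,y)\,\bigl(|y| + |a(s,t,x)|\bigr)
\]
for a constant $C(t,s)$ depending only on $t,s$. Fix $x_0\in\RN$ and let $r=|x_0|$. For $x \in B(x_0,1)$, the mean $a(s,t,x)$ stays in a ball of radius depending only on $r$, $t-s$, and $\|h\|_\infty$, hence bounded by some constant $R=R(r,t,s)$.

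The key estimate is then to bound $F(x,y)$ uniformly in $x\in B(x_0,1)$ by a Gaussian in $y$ with a fixed decay rate. Since $\langle Q(t,s)^{-1}u,u\rangle \geq \|Q(t,s)\|^{-1}|u|^2$ and $|y-a(s,t,x)|\geq |y|-R$, we get, for $|y| \geq 2R$,
\[
F(x,y) \leq \frac{1}{\sqrt{(2\pi)^N}(\det Q(t,s))^{1/2}} \exp\!\Bigl(-\frac{1}{8\|Q(t,s)\|}|y|^2\Bigr).
\]
Combining this with $|f(y)| \leq \|f\|_{\sppeso}\,e^{(1+|y|^2)^\gamma}$ and the elementary inequality $(1+|y|^2)^\gamma \leq 1+|y|^{2\gamma} \leq 1+|y|$ (valid since $2\gamma\leq 1$), the integrand $|f(y)\partial_{x_i}F(x,y)|$ is majorized, uniformly for $x \in B(x_0,1)$, by an explicit function $\phi(y)$ of the form
\[
\phi(y) = C(r,t,s)\,\bigl(|y|+R\bigr)\exp\!\Bigl(|y| - \tfrac{1}{8\|Q(t,s)\|}|y|^2\Bigr)
\]
outside a large ball and a locally bounded function inside, which is therefore in $L^1(\RN)$.

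The main (and only real) obstacle is the balance between the Gaussian decay $e^{-c|y|^2}$ coming from $F(x,y)$ and the exponential growth $e^{(1+|y|^2)^\gamma}$ coming from the weight $p$, which is tight precisely at the endpoint $\gamma=1/2$; the inequality $(1+|y|^2)^{1/2}\leq 1+|y|$ makes the product $e^{|y|-c|y|^2}$ integrable, so the argument closes. With $\phi \in L^1(\RN)$ in hand, the standard differentiation-under-the-integral theorem guarantees that $\OU f$ is differentiable at $x_0$ with $\partial_{x_i} \OU f(x_0) = \int_{\RN} f(y)\partial_{x_i}F(x_0,y)\,dy$, and continuity of the derivative follows from another application of dominated convergence using the same majorant $\phi$. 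Since $x_0$ is arbitrary, $\OU f \in C^1(\RN)$.
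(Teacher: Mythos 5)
Your proof is correct and follows essentially the same route as the paper: the paper simply refers back to the dominated-convergence argument of Proposition \ref{prop1:derOU}, replacing the polynomial majorant by one carrying the factor $\expgammay$, which is exactly the integrable dominating function you construct. You also make explicit the key point the paper leaves implicit, namely that $(1+\abs{y}^2)^{\gamma}\leq 1+\abs{y}$ for $\gamma\leq 1/2$ keeps the majorant integrable against the Gaussian tail.
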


\begin{proof}[Proof]
The claim can be obtained arguing as in the proof of Proposition \ref{prop1:derOU}, with the only difference that the function $\phi$ therein defined should be replaced by the function $\varphi$ defined by
\[
\varphi(y)=
\begin{cases}
\displaystyle\tilde \varphi(y), & \abs{y}<H(t-s,r),\\[1mm]
\tilde \varphi(y)e^{-\frac{1}{2}\norma{Q(t,s)^{-1}}(\abs{y}-H(t-s,r))^2}, & \abs{y}\geq H(t-s,r),
\end{cases}
\]
where
\[
\tilde \varphi(y)=\frac{\expgammay \norma f_{\sppeso}}{\sqrt{(2\pi)^N}{\rm det}\,(Q(t,s))^{1/2}}\norma{Q(t,s)^{-1}}
(\abs{y}+H(t-s,r))
\]
and $H$ is defined in \eqref{eq1:notpol}.
\end{proof}

The following theorem, which is a particular case of Theorem \ref{eq1:contsppeso}, can be proved as Theorem \ref{teo1:stimasppesospkpolx}, taking advantage of estimate \eqref{eq1:stimacontexp} and Proposition \ref{prop1:regderOUexp1}. For this reason, we leave the details to the reader.

\begin{thm}
\label{teo1:stimasppesospkexp}
For every $f\in\sppeso$ and every $s,t\in\R$, with $s<t$, the function $\OU f$ belongs to $C^3_p(\R^N)$. Moreover, for any $\alpha,\theta=0,1,2,3$, such that $\alpha\leq\theta$, there exist two positive constants $C_{\alpha,\theta}$ and $\omega_{\alpha,\theta}$, such that
\begin{eqnarray*}
\norma{\OU f}_{\sppesoteta}\leq \frac{C_{\alpha,\theta}e^{\omega_{\alpha,\theta}(t-s)}}{(t-s)^{(\theta-\alpha)/2}}\norma{f}_{\sppesoalpha},\quad f\in\sppesoalpha,\;\,s<t.
\end{eqnarray*}
\end{thm}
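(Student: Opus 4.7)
The plan is to follow verbatim the strategy of Theorem \ref{teo1:stimasppesospkpolx}, substituting the polynomial weight estimates by the exponential estimate \eqref{eq1:stimacontexp} and the regularity statement of Proposition \ref{prop1:regderOUexp1}. First I would settle the base case $\alpha=0$, $\theta=1$. Fix $s<t$ and $f\in\sppeso$. By Proposition \ref{prop1:regderOUexp1} we already know $\OU f\in C^1(\RN)$, and differentiating under the integral the Gaussian kernel $F$ in \eqref{eq1:asscontgaussleb} produces
\[
D_{x_i}\OU f(x)=\int_{\RN} f(y)\,\prodscal{U^i(t,s),Q(t,s)^{-1}(y-a(s,t,x))} \mathcal{N}_{a(s,t,x),Q(t,s)}(dy).
\]
After the change of variable $y-a(s,t,x)=\sqrt{2}\,Q(t,s)^{1/2}z$, the factor $Q(t,s)^{-1/2}$ produces, via \eqref{eq1:stimaQ1}, the singular term $(t-s)^{-1/2}$; the remaining Gaussian integral is exactly of the form already handled in the proof of Theorem \ref{teo1:stimacontsppesoexp}, up to the harmless polynomial factor $|z|$.

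Next I would pass from $\theta=1$ to $\theta=2,3$ with $\alpha=0$ by the standard evolution-operator trick: setting $s_1=(s+t)/2$ and $g_1=P_{s_1,t}f\in C^1_p(\RN)$, one writes $\OU f=P_{s,s_1}g_1$ and uses $D_{x_j}P_{s,s_1}g_1(x)=\langle U(s_1,s)^j, P_{s,s_1}\nabla g_1(x)\rangle$; together with \eqref{eq1:stimaU} (where Hypothesis \ref{hyp-3} forces $M=1$) and the previously obtained $\theta=1$ estimate applied to $P_{s,s_1}$, one deduces the $\theta=2$ bound with the $(t-s)^{-1}$ singularity. An entirely analogous step, applied to $D_{x_k}P_{s,s_1}g_1\in C^2_p(\RN)$, yields the $\theta=3$ case.

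For the intermediate integer cases $1\le\alpha\le 3$ with $\alpha\le\theta$, I would move $\alpha$ derivatives from the Gaussian kernel onto $f$, using the identity $D_{y_j}F(x,y)$ versus $D_{x_j}F(x,y)$ combined with a change of variables and the assumption $f\in\sppesoalpha$. In this way only $\theta-\alpha$ derivatives act on the kernel, producing only $(t-s)^{-(\theta-\alpha)/2}$ via $\norma{Q(t,s)^{-1/2}}^{\theta-\alpha}$, and the norm of $f$ that appears in the final estimate is its $\sppesoalpha$ norm. Continuity of $D^\beta\OU f$ for $|\beta|\le 3$ is then a straightforward application of dominated convergence, whose integrable majorant is exactly the function $\varphi$ constructed in the proof of Proposition \ref{prop1:regderOUexp1}, multiplied by a polynomial factor of $|z|$.

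The only really delicate point, and the place where the exponential case genuinely differs from the polynomial one, is the control of the resulting Gaussian integrals. The crucial ingredient is the subadditivity inequality $(1+|\xi+\eta|^2)^{\gamma}\le 1+C_\gamma(|\xi|^{2\gamma}+|\eta|^{2\gamma})$, valid since $\gamma\le 1/2$, together with Hypothesis \ref{hyp-3}, which gives $|U(t,s)x|\le|x|$ and hence $|a(s,t,x)|^{2\gamma}\le |x|^{2\gamma}+W(t-s)^{2\gamma}\le (1+|x|^2)^{\gamma}+W(t-s)^{2\gamma}$. This is precisely what is needed to cancel the growth coming from integrating $f(y)/p(y)$ times $p(y)$ against the Gaussian, as already exploited in the proof of Theorem \ref{teo1:stimacontsppesoexp}; without the assumption $\norma{U(t,s)}\le 1$, Lemma \ref{lem1:CNexp} shows the argument breaks down. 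Once this is in place, the polynomial prefactors generated by the $z$-derivatives are absorbed into the Gaussian as in Theorem \ref{teo1:stimacontsppesoexp}, producing bounded coefficients of the form $e^{\omega_{\alpha,\theta}(t-s)}$ and completing the estimate.
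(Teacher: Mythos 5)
Your plan coincides with the proof the paper intends: the paper itself merely states that Theorem \ref{teo1:stimasppesospkexp} "can be proved as Theorem \ref{teo1:stimasppesospkpolx}, taking advantage of estimate \eqref{eq1:stimacontexp} and Proposition \ref{prop1:regderOUexp1}" and leaves the details to the reader, and your outline fills in exactly those details (base case via differentiation of the Gaussian kernel and \eqref{eq1:stimaQ1}, the splitting $\OU=P_{s,s_1}P_{s_1,t}$ for higher derivatives, transfer of derivatives onto $f$ when $\alpha\geq 1$). Your identification of the genuinely exponential-weight ingredient --- the subadditivity of $r\mapsto r^{2\gamma}$ for $\gamma\le 1/2$ combined with Hypothesis \ref{hyp-3}, without which Lemma \ref{lem1:CNexp} shows the argument fails --- is correct and is precisely what makes the polynomial-case scheme carry over.
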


\subsection{Proof of Theorem \ref{teo1:contspalphaint} in the general case.}
\label{subs:generalcase}
Throughout this subsection $p$ denotes both the polynomial and the exponential weight functions.

To prove Theorem \ref{teo1:contspalphaint}  we will use an interpolation argument and the results of the previous subsections. To make the interpolation arguments work, we need to characterize the interpolation space $\displaystyle(\sppesoalpha,\sppesoteta)_{\gamma,\infty}$.

\begin{prop}
\label{prop1:eqsppesointsphol}
Let $0\leq\alpha<\theta$ and $0<\gamma<1$, be such that $\alpha+\gamma(\theta-\alpha)\notin\N$. Then,
\begin{equation}
\label{eq1:carattspint}
\left(C_p^{\alpha}(\RN),C_p^{\theta}(\RN)\right)_{\gamma,\infty}=C_p^{\alpha+\gamma(\theta-\alpha)}(\RN),
\end{equation}
with equivalence of the corresponding norms.
\end{prop}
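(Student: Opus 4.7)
The plan is to lift the corresponding classical identity for unweighted H\"older spaces,
\[
\bigl(C_b^{\alpha}(\RN),C_b^{\theta}(\RN)\bigr)_{\gamma,\infty}=C_b^{\alpha+\gamma(\theta-\alpha)}(\RN),\qquad \alpha+\gamma(\theta-\alpha)\notin\N,
\]
(a standard consequence of the integer-order interpolation identity for bounded H\"older spaces together with the reiteration theorem) to the weighted setting, via the weight-removal map $M\colon f\mapsto f/p$ together with its inverse $M^{-1}\colon g\mapsto pg$, combined with the functoriality of real interpolation.

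The main step is to show that, for every $\mu\in\{\alpha,\theta,\alpha+\gamma(\theta-\alpha)\}$, the map $M$ restricts to a topological isomorphism $M_\mu\colon C_p^{\mu}(\RN)\to C_b^{\mu}(\RN)$. For $\mu=0$ and for $\mu\in(0,1)$ this is immediate, since Definitions \ref{defin-2.1} and \ref{defin-2.2} yield $\|f\|_{C_p^\mu(\RN)}=\|f/p\|_{C_b^\mu(\RN)}$ by construction. For $\mu\ge 1$ one invokes the two Leibniz identities
\[
D^\beta(f/p)=\sum_{\delta\leq\beta}\binom{\beta}{\delta}\,D^{\beta-\delta}(1/p)\,D^\delta f,\qquad D^\beta(pg)=\sum_{\delta\leq\beta}\binom{\beta}{\delta}\,D^{\beta-\delta}p\,D^\delta g,
\]
which reduce the continuity of $M_\mu$ and $M_\mu^{-1}$ to the assertion that the multiplier functions $p^{-1}D^\beta p$ and $p\,D^\beta(1/p)$ belong to $C_b^{r}(\RN)$ for every $r\ge 0$ and every multi-index $\beta$. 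For the polynomial weight $p(x)=1+|x|^{2m}$ this is a direct rational-function computation. For the exponential weight $p(x)=\expgammax$ with $\gamma\in(0,1/2]$, an application of Fa\`a di Bruno's formula identifies both functions as polynomials in the derivatives $\partial^\sigma\log p$ with $1\le|\sigma|\le|\beta|$; an easy induction then shows that each such derivative is a sum of terms of the form $(1+|x|^2)^{\gamma-|\sigma|/2}$ times a polynomial in $x$ of degree at most $|\sigma|$, and the assumption $\gamma\le 1/2$ is exactly what is needed to make the leading term $\partial_i\log p$ bounded on $\RN$.

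Once the isomorphisms $M_\mu$ are in hand, functoriality of the real $K$-method yields that $M$ induces a topological isomorphism $(C_p^{\alpha},C_p^{\theta})_{\gamma,\infty}\to(C_b^{\alpha},C_b^{\theta})_{\gamma,\infty}$; combining this with the classical interpolation identity displayed above and then applying $M_{\alpha+\gamma(\theta-\alpha)}^{-1}$ gives the desired equality \eqref{eq1:carattspint} with equivalence of norms. The main obstacle in this scheme is the bookkeeping required to establish the multiplier estimates on $p$ and $1/p$: the borderline case $\gamma=1/2$ for the exponential weight is the most delicate, since there $\partial_i\log p$ is merely bounded rather than vanishing at infinity, so no decay is available to absorb higher-order derivatives and one must exploit the precise algebraic structure of $\log p$ rather than any naive bound.
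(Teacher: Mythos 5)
Your proposal is correct and follows essentially the same route as the paper: the paper also conjugates by the weight-removal map $f\mapsto f/p$ (its Lemma \ref{lem1:carattsppesoteta}, which rests on the same observation that $D^{\beta}(1/p)=g_{\beta}/p$ with $g_{\beta}$ bounded), and then combines the functoriality of the real interpolation method with the classical identity $(C_b^{\alpha}(\RN),C_b^{\theta}(\RN))_{\gamma,\infty}=C_b^{\alpha+\gamma(\theta-\alpha)}(\RN)$. The only difference is that you spell out the multiplier estimates (Leibniz, Fa\`a di Bruno, the borderline case $\gamma=1/2$) that the paper dispatches in one line.
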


The proof of Proposition \ref{prop1:eqsppesointsphol} relies on the following equivalent characterization of the weighted H\"older spaces.

\begin{lem}
\label{lem1:carattsppesoteta}
For every $\theta\le 3$ we have
\begin{equation}
\label{eq1:carattsppesoteta}
\sppesoteta=\Bigl\{f\in\spderteta:D^{\alpha}\Bigl(\frac{f}{p}\Bigl)\in\splim,\forall|\alpha|\leq \theta\Bigl\}.
\end{equation}
Moreover, the norm $\normak{\cdot}$ defined by
\begin{eqnarray*}
\normak{f}_{\sppesoteta}:=\|f/p\|_{C^{\theta}_b(\RN)}
\end{eqnarray*}
is equivalent to the usual norm $\norma{\cdot}_{\sppesoteta}$.
\end{lem}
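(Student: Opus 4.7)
The plan is to prove the set equality in \eqref{eq1:carattsppesoteta} and the equivalence of $\norma{\cdot}_{\sppesoteta}$ with $\normak{\cdot}_{\sppesoteta}$ simultaneously, by establishing two-sided bounds between $\sum_{|\alpha|\le\theta}\norma{D^\alpha f/p}_\infty$ and $\sum_{|\alpha|\le\theta}\norma{D^\alpha(f/p)}_\infty$. These bounds will be obtained via the Leibniz rule, once suitable pointwise estimates on the derivatives of $p$ and of $1/p$, relative to $p$, are in hand.

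\textbf{Step 1 (auxiliary estimates on $p$).} The first task is to show that for every multi-index $\alpha$ with $|\alpha|\le\theta$ there is a constant $C_\alpha>0$ such that
\[
|D^\alpha p(x)|\le C_\alpha\, p(x),\qquad |D^\alpha(1/p)(x)|\le C_\alpha/p(x),\qquad x\in\RN.
\]
For the polynomial weight $p(x)=\polx$, this is an elementary computation: $D^\alpha p$ is a polynomial of degree at most $2m-|\alpha|\le 2m$ and is therefore dominated by a multiple of $p$, while $D^\alpha(1/p)$ is a rational function whose numerator has degree at most $2m|\alpha|$ and denominator $p^{|\alpha|+1}$, so that $p\cdot D^\alpha(1/p)$ is bounded. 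For the exponential weight $p(x)=\expgammax$, one proceeds by induction on $|\alpha|$, using Fa\`a di Bruno on $e^{(1+|x|^2)^\gamma}$ to reduce everything to bounding, for $k\le 3$, the factors of the form $(1+|x|^2)^{\gamma-k/2}$ produced when differentiating $(1+|x|^2)^\gamma$; the constraint $\gamma\in(0,1/2]$ is exactly what makes these factors uniformly bounded on $\RN$.

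\textbf{Step 2 (two sides of the equivalence).} With the estimates of Step 1 available, both inclusions and norm inequalities follow by Leibniz. For the inclusion $\sppesoteta\subseteq\{f\in\spderteta:D^\alpha(f/p)\in\splim\ \forall|\alpha|\le\theta\}$, write
\[
D^\alpha(f/p)(x)=\sum_{\beta\le\alpha}\binom{\alpha}{\beta}D^\beta f(x)\,D^{\alpha-\beta}(1/p)(x)
\]
and bound each term by $C\,|D^\beta f(x)|/p(x)$, yielding $\normak{f}_{\sppesoteta}\le C\norma{f}_{\sppesoteta}$. Conversely, writing $f=p\cdot(f/p)$ and applying Leibniz,
\[
D^\alpha f(x)=\sum_{\beta\le\alpha}\binom{\alpha}{\beta}D^\beta p(x)\,D^{\alpha-\beta}(f/p)(x),
\]
and the bound $|D^\beta p(x)|\le C_\beta p(x)$ from Step 1 gives $|D^\alpha f(x)|/p(x)\le C\sum_\gamma\norma{D^\gamma(f/p)}_\infty$, i.e.\ $\norma{f}_{\sppesoteta}\le C\normak{f}_{\sppesoteta}$. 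Combining both inequalities yields the set equality in \eqref{eq1:carattsppesoteta} and the equivalence of the two norms.

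\textbf{Main obstacle.} The routine step is the Leibniz computation in Step 2; the substantive point is Step 1 for the exponential weight, where one must track the powers of $(1+|x|^2)$ arising from repeated differentiation and verify they stay non-positive. Already at first order one has $\partial_i p/p=2\gamma x_i(1+|x|^2)^{\gamma-1}$, which is bounded iff $\gamma\le 1/2$; higher orders ($k\le 3$) then proceed analogously by induction, and this is precisely where the hypothesis $\gamma\in(0,1/2]$ from the definition of the exponential weights is used.
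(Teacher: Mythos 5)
Your proposal is correct and follows essentially the same route as the paper, whose entire proof is the observation that $D^{\alpha}(1/p)=g_{\alpha}/p$ for some bounded $g_{\alpha}$ (from which both inclusions follow by Leibniz, exactly as in your Step 2). You merely spell out the details the paper leaves implicit, including the symmetric bound $|D^{\alpha}p|\le C_{\alpha}p$ for the converse inclusion and the precise point where $\gamma\le 1/2$ is needed for the exponential weight.
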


\begin{proof}
The claim follows immediately from observing that
\[
D^\alpha\left(\frac{1}{p(x)}\right)=\frac{g_\alpha(x)}{p(x)},
\]
for every multi-index $\alpha$ with length less than or equal to $\theta$ and
some bounded function $g_{\alpha}$.
\end{proof}

\begin{rmk}
Even if, for our aims we need \eqref{eq1:carattsppesoteta} with $\theta\le 3$, we stress that such a characterization of the H\"older weighted spaces holds true for every $\theta\in\N$.
\end{rmk}

We can now prove Proposition \ref{prop1:eqsppesointsphol}.

\begin{proof}[Proof of Proposition \ref{prop1:eqsppesointsphol}]
Let $0\leq\alpha<\theta$ and $0<\gamma<1$ be such that $\alpha+\gamma(\theta-\alpha)\notin\N$. We define the operator $T$ by setting $Tf=f/p$ for any $f\in\sppeso$. By Lemma \ref{lem1:carattsppesoteta}, $T$ is a well defined, bounded and invertible operator mapping $\sppesoalpha$ into $C_b^\alpha(\R^N)$ and $\sppesoteta$ into $C_b^\theta(\RN)$.
Hence, from \cite[Proposition $1.2.6$]{lunardi1:tesi} it follows that
\[
\begin{split}
T & \in\mathcal{L}((\sppesoalpha,\sppesoteta)_{\gamma,\infty},(C_b^\alpha(\RN),C_b^\theta(\RN))_{\gamma,\infty}), \\[2mm]
T^{-1} & \in\mathcal{L}((C_b^\alpha(\RN),C_b^\theta(\RN))_{\gamma,\infty},(\sppesoalpha,\sppesoteta)_{\gamma,\infty}).
\end{split}
\]
Since $(C_b^\alpha(\RN),C_b^\theta(\RN))_{\gamma,\infty}=C_b^{\alpha+\gamma(\theta-\alpha)}(\RN)$ (see \cite[Corollary $1.2.8$]{lunardi1:tesi}), from Lemma \ref{lem1:carattsppesoteta} we deduce that
\[
f\in(\sppesoalpha,\sppesoteta)_{\gamma,\infty}\Rightarrow\frac{f}{p}\in C_b^{\alpha+\gamma(\theta-\alpha)}(\RN)\Rightarrow f\in C_p^{\alpha+\gamma(\theta-\alpha)}(\RN),
\]
i.e., $(\sppesoalpha,\sppesoteta)_{\gamma,\infty}\subset C_p^{\alpha+\gamma(\theta-\alpha)}(\RN)$. Moreover, the embedding is continuous.

Similar computations and the fact that
$$
T^{-1}\in\mathcal{L}((C_b^\alpha(\RN),C_b^\theta(\RN))_{\gamma,\infty},(\sppesoalpha,\sppesoteta)_{\gamma,\infty})
$$
 yield the other inclusion in \eqref{eq1:carattspint}.
\end{proof}

Now, using Proposition \ref{prop1:eqsppesointsphol} we can complete the proof of Theorem \ref{teo1:contspalphaint}.

\begin{proof}[Proof of Theorem \ref{teo1:contspalphaint}]
Since $\OU\in\mathcal{L}\left(\sppeso,\sppeso\right)\cap\mathcal{L}\left(C_p^3(\R^N),C_p^3(\R^N)\right)$, the case $\theta=\alpha<3$ follows from applying \cite[Proposition $1.2.6$]{lunardi1:tesi} to the operator $\OU$. Indeed, this proposition shows that $\OU\in\mathcal L\left((\sppeso,C_p^3(\R^N))_{\alpha/3,\infty}\right)$, and we can conclude using the equality $(\sppeso,C_p^3(\R^N))_{\alpha/3,\infty}=\sppesoalpha$.

Let us now suppose that $\alpha<\theta$. We first show that \eqref{eq1:contsppeso} holds true when $\alpha\in (0,1)$.
For this purpose, we observe that, since $(\sppeso, C_p^1(\R^N))_{\alpha,\infty}=\sppesoalpha$ and
\[
\OU\in\mathcal{L}(\sppeso,C_p^3(\R^N))\cap\mathcal{L}(C_p^1(\R^N),C_p^3(\R^N)),
\]
by \cite[Proposition $1.2.6$]{lunardi1:tesi} and Theorems \ref{teo1:stimacontsppesopolx} and \ref{teo1:stimasppesospkexp} it follows that
\[
\begin{split}
\norma{\OU}_{\mathcal{L}(\sppesoalpha,C_p^3(\R^N))}
& \leq \norma{\OU}_{\mathcal{L}(\sppeso,C_p^3(\R^N))}^{1-\alpha}\norma{\OU}_{\mathcal{L}(C_p^3(\R^N),C_p^3(\R^N))}^{\alpha} \\
& \leq C_\alpha e^{\omega_\alpha(t-s)}(t-s)^{-(1-\alpha)3/2-\alpha} \\
& = C_\alpha e^{\omega_\alpha(t-s)}(t-s)^{-(3-\alpha)/2}.
\end{split}
\]

The cases when $\alpha\in (1,2)$ and $\alpha\in (2,3)$ can be treated in the same way, taking Proposition \ref{prop1:eqsppesointsphol} into account.

Finally, for any $0<\alpha\leq\theta<3$, $\sppesoteta=(\sppesoalpha,C_p^3(\R^N))_{(\theta-\alpha)/(3-\alpha)}$, and
\[
\begin{split}
\norma{\OU}_{\mathcal{L}(\sppesoalpha,\sppesoteta)}
& \leq \norma{\OU}_{\mathcal{L}(\sppesoalpha,\sppesoalpha)}^{1-(\theta-\alpha)/(3-\alpha)}
\norma{\OU}_{\mathcal{L}(\sppesoalpha,C_p^3(\R^N))}^{(\theta-\alpha)/(3-\alpha)} \\[1mm]
& \leq C_{\alpha,\theta} e^{\omega_{\alpha,\theta}(t-s)}(t-s)^{\left(-(3-\alpha)/2\right)(\theta-\alpha)/(3-\alpha)} \\[1mm]
& = C_{\alpha,\theta} e^{\omega_{\alpha,\theta}(t-s)}(t-s)^{-(\theta-\alpha)/2}.
\end{split}
\]
\end{proof}

\subsection{Time and spatial continuity of Ornstein-Uhlenbeck operator}
\label{subs:continuity}
We have already proved that, for any fixed $s,t\in\R$ with $s<t$, $\OU f$ is smooth with respect to $x$ for every $f\in\sppeso$.
Since in the next section we will apply the operator $\OU$ also to functions depending both on $t$ and $x$, we need to study the continuity
properties of the Ornstein-Uhlenbeck operator applied to such functions.

\begin{prop}
\label{prop1:contsOU}
Let $f:[c,d]\times\R^N\to \R$ be a function such that
\begin{equation}
\label{eq1:fcondition}
f(r,\cdot)\in\sppeso, \quad \forall r\in[c,d], \quad \sup_{r\in[c,d]}\,\norma{f(r,\cdot)}_{\sppeso}<+\infty.
\end{equation}
Then, the function $(s,t,x)\mapsto\OU f(t,\cdot)(x)$ is continuous in $\Lambda:=\{(s,t,x)\in\R^{n+2}: s,t\in[c,d],\ s\leq t\}$.
\end{prop}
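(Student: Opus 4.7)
The plan is to work directly from the explicit representation
\[
\OU f(t,\cdot)(x)=\int_{\R^N}f(t,y)\,\mathcal{N}_{a(s,t,x),Q(t,s)}(dy)
\]
and establish joint continuity in the two regimes $\{s<t\}$ and $\{s=t\}$ separately. Throughout I assume (as is implicit in the statement, since otherwise the conclusion fails) that $f$ is jointly continuous on $[c,d]\times\R^N$; the spatial $\sppeso$-bound uniform in $r$ is then the only quantitative input. The overall strategy is a two-fold application of the dominated convergence theorem, combined with the fact that $P_{r,r}=\mathrm{id}$.

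\textbf{Interior case: $s_0<t_0$.} Fix $(s_0,t_0,x_0)\in\Lambda$ with $s_0<t_0$ and a sequence $(s_n,t_n,x_n)\to(s_0,t_0,x_0)$. For $n$ large we have $t_n-s_n\ge(t_0-s_0)/2>0$, so by Lemmas \ref{lem:stimaU} and \ref{lemma1:maggQ(t,s)} the quantities $a(s_n,t_n,x_n)$, $Q(t_n,s_n)$, $Q(t_n,s_n)^{-1}$ and $\det Q(t_n,s_n)^{1/2}$ are bounded and bounded away from the singular regime. Thus, via \eqref{eq1:asscontgaussleb}, the density $F_n$ of $\mathcal{N}_{a(s_n,t_n,x_n),Q(t_n,s_n)}$ depends continuously on $(s,t,x)$ and converges pointwise to $F_0$. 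Joint continuity of $f$ yields pointwise convergence of $f(t_n,y)F_n(y)$; the dominant is obtained by majorizing $F_n$ by a fixed Gaussian density $\bar F$ whose covariance is slightly larger than any $Q(t_n,s_n)$ in the compact range, and noting that $|f(t_n,y)|\le(\sup_r\|f(r,\cdot)\|_{\sppeso})p(y)$, with $p(y)\bar F(y)\in L^1(\R^N)$ since $p$ grows at most polynomially or as $\exp((1+|y|^2)^{\gamma})$ with $\gamma\le1/2$.

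\textbf{Diagonal case: $s_0=t_0$.} Fix $(s_0,s_0,x_0)\in\Lambda$ and take $(s_n,t_n,x_n)\to(s_0,s_0,x_0)$ with $s_n\le t_n$. Since $P_{s_0,s_0}f(s_0,\cdot)(x_0)=f(s_0,x_0)$, the goal is $\OU f(t_n,\cdot)(x_n)\to f(s_0,x_0)$. After the change of variable $y=a(s_n,t_n,x_n)+Q(t_n,s_n)^{1/2}z$ I rewrite
\[
P_{s_n,t_n}f(t_n,\cdot)(x_n)=(2\pi)^{-N/2}\int_{\R^N}f\bigl(t_n,a(s_n,t_n,x_n)+Q(t_n,s_n)^{1/2}z\bigr)e^{-|z|^2/2}dz.
\]
Since $U(t_n,s_n)\to I$, $g(t_n,s_n)\to 0$ and $\|Q(t_n,s_n)^{1/2}\|\to 0$ as $t_n-s_n\to 0$ (directly from their defining integrals and Hypotheses \ref{hyp1:coeffA}--\ref{hyp1:coeffB}), the argument of $f$ converges to $x_0$ for each fixed $z$, and joint continuity of $f$ yields pointwise convergence of the integrand to $f(s_0,x_0)e^{-|z|^2/2}$. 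For the dominant I use $|f(t_n,\cdot)|\le C p$, and observe that along the sequence $|a(s_n,t_n,x_n)+Q(t_n,s_n)^{1/2}z|\le C'(1+|z|)$, so $p(a(s_n,t_n,x_n)+Q(t_n,s_n)^{1/2}z)$ is bounded by $C''(1+|z|^{2m})$ in the polynomial case and by $\exp(C''(1+|z|)^{2\gamma})$ in the exponential case; both are integrable against $e^{-|z|^2/2}$. Dominated convergence then gives the limit $f(s_0,x_0)$.

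\textbf{Main obstacle.} The delicate point is the diagonal case, especially in the exponential regime $\gamma=1/2$: one needs to verify that, despite $p(y)\sim e^{|y|}$, the shifted--rescaled weight $p(a_n+Q_n^{1/2}z)$ admits an $n$--uniform integrable majorant against the standard Gaussian. This relies crucially on $\|Q(t_n,s_n)^{1/2}\|\to 0$ and on the uniform boundedness of $a(s_n,t_n,x_n)$ along convergent sequences, so that the quadratic $e^{-|z|^2/2}$ dominates the at-most-linear growth of the exponent of $p$.
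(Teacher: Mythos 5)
Your proof is correct and follows essentially the same route as the paper: both arguments reduce to dominated convergence after the Gaussian change of variables $y=a(s,t,x)+Q(t,s)^{1/2}z$, with the uniform bound $|f(r,\cdot)|\le C\,p$ supplying an integrable majorant against $e^{-|z|^2/2}$ (the paper organizes the limit as a three-term triangle-inequality split in $s$, $t$ and $x$ rather than along sequences, and, like you, it implicitly needs joint continuity of $f$; it leaves the diagonal case $s_0=t_0$ to the reader, which you work out explicitly). The only minor imprecision is in your interior case: a single Gaussian $\bar F$ with enlarged covariance but fixed mean does not dominate the densities $F_n$ pointwise when the means $a(s_n,t_n,x_n)$ vary, so you should instead use that $|a_n|\le R$ to bound $F_n(y)\le c\,e^{-\frac{\lambda}{2}(|y|-R)^2}$ for $|y|\ge R$ (and by $c$ otherwise), which is still integrable against $p$.
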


\begin{rmk}
Any function $\psi\in\sppeso$ obviously satisfies \eqref{eq1:fcondition}. Hence, the function $(s,t,x)\mapsto \OU\psi(x)$ is continuous in $\Lambda$.
\end{rmk}

\begin{proof}[Proof of Proposition \ref{prop1:contsOU}]
Let $(s_0,t_0,x_0)\in\Lambda$ be such that $s_0<t_0$, and let us fix $r>0$ such that $B((s_0,t_0,x_0),r)\subset\Lambda$. We can estimate
\[
\begin{split}
\abs{\OU f(t,\cdot)(x)-P_{s_0,t_0}f(t_0,\cdot)(x_0)}
& \leq\abs{\OU f(t,\cdot)(x)-P_{s_0,t}f(t,\cdot)(x)} \\
& \quad +\abs{P_{s_0,t}f(t,\cdot)(x)-P_{s_0,t_0}f(t_0,\cdot)(x)} \\
& \quad +\abs{P_{s_0,t_0}f(t_0,\cdot)(x)-P_{s_0,t_0}f(t_0,\cdot)(x_0)}.
\end{split}
\]

The last term in the right-hand side of the previous inequality tends to zero as $x\rightarrow x_0$. Let us consider the
first term, the other one can be treated in the same way. Note that
\begin{equation}
\begin{split}
\abs{& P_{s,t}f(t,\cdot)(x)-P_{s_0,t}f(t,\cdot)(x)} \\
& \leq \int_{\RN}\max_{x\in \overline{B(x_0,r)}}\Bigl{\lvert}f(t,Q(t,s)^{1/2}z+U(t,s)x+g(t,s)) \\
& \qquad\qquad\quad\qquad\quad -f(t,Q(t,s_0)^{1/2}z+U(t,s_0)x+g(t,s_0))\Bigl{\rvert}e^{-\frac{1}{2}\abs{z}^2}dz,
\end{split}
\label{1-1}
\end{equation}
for any $s,t\in\R$, such that $s\leq\min\{t,t_0\}$, and any $x\in\RN$.
The right-hand side of \eqref{1-1} tends to zero, as $s\rightarrow s_0$ by dominated convergence. Indeed, the function under the integral sign tends to zero, for every fixed $z\in\R^N$, and the function $\tau$, defined by
\[
\tau(z)=
\left\{
\begin{array}{ll}
\displaystyle C_1(1+\abs{z}^{2m}), & \textrm{if }p(x)=\polx, \\[2mm]
\displaystyle C_1e^{C_1\abs{z}}, & \textrm{if }p(x)=\expgammax,
\end{array}
\right.
\]
where $C_1$ is a suitable positive constant, is such that $z\mapsto\tau(z)e^{-\abs z^2/2}$ is integrable in $\R^N$, and
\[
\abs{f(t,Q(t,s)^{1/2}z+U(t,s)x+g(t,s))}\leq\tau(z), \quad \forall (s,x)\in B((s_0,x_0),r), \quad \forall z\in\R^N.
\]

The continuity of the function $(s,t,x)\mapsto \OU f(t,\cdot)(x)$ at the point $(s_0,s_0,x_0)$ can be proved in a similar way.
\end{proof}

\subsection{Compactness of $\OU$}
In this subsection, we prove that $\OU$ is compact in $C_p(\R^N)$.

\begin{thm}
$\OU$ is compact in $\sppeso$, for any $s<t$.
\end{thm}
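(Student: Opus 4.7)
Take a sequence $(f_n)\subset\sppeso$ with $\norma{f_n}_{\sppeso}\le 1$. The goal is to extract a subsequence along which $(\OU f_n)$ is Cauchy in $\sppeso$.

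\emph{Step 1 (smoothing).} Theorem \ref{teo1:contspalphaint} applied with $\alpha=0$ and $\theta=1$ gives
\[
\norma{\OU f_n}_{C_p^1(\RN)}\le \frac{C_{0,1}e^{\omega_{0,1}(t-s)}}{(t-s)^{1/2}},\qquad\forall n\in\N.
\]
By Lemma \ref{lem1:carattsppesoteta}, the renormalised sequence $u_n:=\OU f_n/p$ is therefore bounded in $C_b^1(\RN)$; in particular $(u_n)$ is uniformly bounded and equi-Lipschitz on the whole of $\RN$.

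\emph{Step 2 (local extraction).} The Ascoli--Arzel\`a theorem applied on each closed ball $\overline{B_k}\subset\RN$, $k\in\N$, together with a standard diagonal argument, produces a subsequence (still denoted $(u_n)$) converging uniformly on compact subsets of $\RN$ to some $h\in C_b(\RN)$.

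\emph{Step 3 (uniform tail estimate).} The decisive step is the claim
\begin{equation}\label{plan:tail}
\lim_{R\to+\infty}\,\sup_{\norma{f}_{\sppeso}\le 1}\ \sup_{\abs{x}\ge R}\ \bigabs{\frac{\OU f(x)}{p(x)}}=0.
\end{equation}
Since $\abs{f(y)}\le p(y)\norma{f}_{\sppeso}$ pointwise, \eqref{plan:tail} reduces to proving $\OU p(x)/p(x)\to 0$ as $\abs{x}\to+\infty$. The change of variable $y=U(t,s)x+g(t,s)+Q(t,s)^{1/2}z$ yields
\[
\frac{\OU p(x)}{p(x)}=\frac{1}{(2\pi)^{N/2}}\int_{\RN}\frac{p(U(t,s)x+g(t,s)+Q(t,s)^{1/2}z)}{p(x)}\,e^{-\abs{z}^2/2}\,dz.
\]
In the exponential weight case, Hypothesis \ref{hyp-3} ensures $\abs{U(t,s)x}\le e^{-\omega(t-s)}\abs{x}$ with $\omega>0$; using the subadditivity of $t\mapsto t^{\gamma}$ (valid since $2\gamma\le 1$) one finds
\[
(1+\abs{U(t,s)x+g(t,s)+Q(t,s)^{1/2}z}^2)^{\gamma}-(1+\abs{x}^2)^{\gamma}\le -(1-e^{-2\omega\gamma(t-s)})\abs{x}^{2\gamma}+C+C\abs{z}^{2\gamma},
\]
so the integrand is pointwise dominated by an $L^1(dz)$ majorant of the form $e^{C(1+\abs{z})-\abs{z}^2/2}$, independent of large $\abs{x}$, and tends to $0$ as $\abs{x}\to+\infty$. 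Dominated convergence then gives \eqref{plan:tail}.

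\emph{Step 4 (conclusion).} Given $\varepsilon>0$, use \eqref{plan:tail} to fix $R>0$ so that $\abs{u_n(x)}\le\varepsilon/3$ for every $n$ and every $\abs{x}\ge R$; in particular $\abs{u_n(x)-u_m(x)}\le 2\varepsilon/3$ on $\{\abs{x}\ge R\}$. Step 2 yields $\sup_{\abs{x}\le R}\abs{u_n(x)-u_m(x)}\le\varepsilon/3$ for $n,m$ sufficiently large, and combining, $\norma{u_n-u_m}_\infty\le\varepsilon$, i.e., $\norma{\OU f_n-\OU f_m}_{\sppeso}\le\varepsilon$. Hence $(\OU f_n)$ is Cauchy in $\sppeso$ and compactness follows.

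\emph{Main obstacle.} Step 3 is the crux. For the exponential weight, the argument hinges essentially on the contractivity of $U(t,s)$ supplied by Hypothesis \ref{hyp-3}, which forces $p(y)/p(x)$ to be super-polynomially small at infinity on the effective support of the Gaussian. In the polynomial weight case the analogous ratio $\OU p(x)/p(x)$ only stays bounded (by essentially $\norma{U(t,s)}^{2m}$) rather than vanishing, so the straightforward tail estimate is not available; there one must refine Step 3, exploiting the higher-order smoothing $\OU f\in C_p^3$ from Theorem \ref{teo1:contspalphaint} and a Leibniz expansion of $D^{\beta}(\OU f/p)$ in which the factors $D^{\gamma}(1/p)$ provide the extra decay needed to close the argument.
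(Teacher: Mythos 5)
Your argument is complete only for the exponential weights. In that case Steps 1--4 do work, and they follow a genuinely different route from the paper's: you prove total boundedness of the image of the unit ball directly, combining the gradient bound of Theorem \ref{teo1:contspalphaint} (which, via Lemma \ref{lem1:carattsppesoteta}, gives equi-Lipschitzianity of $\OU f/p$ on all of $\RN$), an Ascoli--Arzel\`a plus diagonal extraction, and the uniform tail estimate of Step 3. That estimate is indeed available for exponential weights, because Hypothesis \ref{hyp-3} forces $\abs{U(t,s)x}\leq e^{-\omega(t-s)}\abs{x}$ with $e^{-\omega(t-s)}<1$, whence $\OU p(x)/p(x)\lesssim \exp\bigl(-(1-e^{-2\gamma\omega(t-s)})\abs{x}^{2\gamma}+C\bigr)\to 0$; your dominated-convergence majorant $e^{C(1+\abs z)-\abs z^2/2}$ is correct. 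So in the exponential case you have a clean, self-contained proof, arguably more elementary than the paper's.

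The polynomial case is a genuine gap, and the ``refinement'' you sketch at the end cannot close it. As you yourself note, the tail estimate of Step 3 fails there: taking $f=p$, the ratio $\OU p(x)/p(x)$ tends to a nonzero limit along rays. The proposed substitute --- extra smoothing plus a Leibniz expansion of $D^{\beta}(\OU f/p)$ using the decay of $D^{\gamma}(1/p)$ --- does not help, because boundedness in $C^k_b(\RN)$ together with decay of all derivatives at infinity does \emph{not} imply precompactness in $C_b(\RN)$ (consider $x\mapsto\sin(x/n)$: uniformly bounded, derivatives tending to $0$ uniformly, locally uniformly convergent to $0$, yet of sup-norm $1$). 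Moreover, the obstruction is real at the level of the image: for $f_k=p\,\phi(\cdot-ke_1)$ with $\phi$ a fixed bump, $\OU f_k/p$ is, near $x_k=U(t,s)^{-1}(ke_1-g(t,s))$, essentially a translate of a fixed nonzero profile, so the family $\{\OU f/p:\norma{f}_{\sppeso}\leq 1\}$ is not uniformly small at infinity and no pointwise decay of derivatives will manufacture total boundedness. What is needed is a mechanism that localizes in the \emph{intermediate} variable rather than in $x$: the paper writes $\OU=P_{s,r}P_{r,t}$ for a fixed $r\in(s,t)$, approximates $\OU$ in operator norm by $S_n=P_{s,r}(\chi_{B(0,n)}P_{r,t}\cdot)$, shows each $S_n$ is compact by applying Ascoli--Arzel\`a to $\{(P_{r,t}f)_{|\overline{B(0,n)}}\}$ (this is where your Steps 1--2 reappear) together with the strong Feller property of $P_{s,r}$, and concludes because an operator-norm limit of compact operators is compact. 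Without an approximation of this type, your polynomial case remains unproved.
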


\begin{proof}
For any $f\in\sppeso$, any $s,t\in\R$, with $s<t$, and any $n\in\N$, we set
\[
S_n f=P_{s,r}(\chi_{B(0,n)}P_{r,t}f),
\]
where $r$ is arbitrarily fixed in $(s,t)$. Obviously, $S_nf$ belongs to $\sppeso$, since it belongs to $\splim$; we are going to show that $S_n f$ converges to $\OU f$ uniformly in $\sppeso$ as $n\to +\infty$.
Since $\OU f=P_{s,r}P_{r,t}f$, we have
\[
\begin{split}
& \frac{\abs{S_nf(x)-\OU f(x)}}{p(x)} \\[1mm]
& \quad \leq \frac{1}{p(x)}\int_{\R^N\setminus B(0,n)}\abs{P_{r,t}f(y+U(r,s)x+g(r,s))}\mathcal N_{Q(r,s)}(dy) \\[1mm]
& \quad \leq \frac{c\norma f_{\sppeso}}{p(x)}\int_{R^N\setminus B(0,n)}p(y+U(r,s)x+g(r,s))\mathcal N_{Q(r,s)}(dy),
\end{split}
\]
for some positive constant $c$.

If $p$ is a polynomial weight function, from the computations in \eqref{eq1:eq1polweight}, it follows that
\[
\begin{split}
& \frac{\abs{S_nf(x)-\OU f(x)}}{p(x)} \\
& \quad \leq c\norma{f}_{\sppeso}\left(\int_{\RN\setminus B(0,n)} \abs y^{2m}\mathcal{N}_{Q(r,s)}(dy)+\int_{\RN\setminus B(0,n)}\mathcal{N}_{Q(r,s)}(dy)\right),
\end{split}
\]
for some positive constant $c$.

The last side of the previous inequality vanishes as $n\rightarrow+\infty$, uniformly with respect to $x$. Hence, $S_n$ tends to $\OU$ uniformly in $\RN$. If $p$ is an exponential weight function, long but straightforward computations yield to the same conclusion. Hence, to prove the compactness of $\OU$, it suffices to show that each operator $S_n$ is compact in $C_p(\R^N)$. Since the Ornstein-Uhlenbeck evolution operator is strong Feller, we can limit ourselves to proving that the operator $f\mapsto\chi_{B(0,n)}P_{r,t}f$ is compact from $\sppeso$ into $B_b(\R^N)$, for any $n\in\N$.
For this purpose, let $\mathcal F$ be a bounded subset of $\sppeso$, and let $\mathcal G:=\{(P_{r,t}f)_{|B(0,n)}:f\in\mathcal F\}$. By Theorem \ref{teo1:contspalphaint}, for any $\theta>0$ we have
\[
\abs{P_{r,t} f(x)-P_{r,t} f(y)}\leq c_n\abs{x-y}^\theta\norma{f}_{\sppeso},
\]
for any $f\in\mathcal F$, where $c_n$ is a suitable positive constant.

Hence, $\mathcal G$ is equicontinuous and equibounded in $C(\overline{B(0,n)})$, and consequently in $B_b(\R^N)$. By the Arzel\`a-Ascoli theorem, $\chi_{B(0,n)}P_{r,t}$ is compact from $\sppeso$ into $B_b(\R^N)$.
\end{proof}

\section{The nonhomogenous Cauchy problem}
In this section, we consider the Cauchy problem
\begin{equation}
\label{eq2:problemadicauchynonomo}
\left\{
\begin{array}{lll}
D_su(s,x)+L(s)u(s,x)=f(s,x), & s\in[a,T), & x\in\RN, \\[2mm]
u(T,x)=\varphi(x), && x\in\RN,
\end{array}
\right.
\end{equation}
where $a,T\in\R$ are such that $a<T$, $\{L(s)\}_{s\in\R}$ is the family of differential operators defined in \eqref{eq1:defOUdiff}, and $p$ is either the polynomial or the exponential weight functions considered in the previous sections. Besides Hypotheses
\ref{hyp1:coeffA}, \ref{hyp1:coeffB} and \ref{hyp-3} (this latter only when we consider exponential weight functions),
we assume the following conditions on $f$ and $\varphi$:

\begin{hyp}
\label{hyp2:cauchyfunct}
$f:[a,T]\times\RN\to\R$ is a continuous function such that
\begin{eqnarray*}
\left\{
\begin{array}{ll}
f(s,\cdot)\in\sppesoteta, & \forall s\in[a,T], \\[2mm]
\displaystyle\sup_{s\in[a,T]}\norma{f(s,\cdot)}_{\sppesoteta}<\infty,
\end{array}
\right.
\end{eqnarray*}
and $\varphi\in C_p^{2+\theta}(\RN)$, for some $\theta\in(0,1)$.
\end{hyp}
Moreover, if $p$ is an exponential weight function, we also assume the following condition on the matrix $A$ in \eqref{eq1:defOUdiff}.

\begin{hyp}
\label{hyp-4}
For any $s\in[a,T]$ the matrix $A(s)$ is negative definite.
\end{hyp}

We need to introduce another type of weighted spaces, whose definition is very intuitive.

\begin{defin}
We denote by $C_p([a,T]\times\RN)$ the set of all continuous functions $f:[a,T]\times\R^N\to\R$ such that
$f/p$ is bounded.
\end{defin}

The function $u:[a,T]\to\RN\to\R$ defined by
\begin{equation}
\label{eq2:solprdicauchynonomo}
u(s,x)=\OUT \varphi (x)+\int_s^TP_{s,r} f(r,\cdot)(x)dr, \quad s\in[a,T], \quad x\in\RN,
\end{equation}
is called the ``mild solution'' to problem \eqref{eq2:problemadicauchynonomo}.
The main result of this section is the following theorem:

\begin{thm}
\label{teo2:probcauchynonomo}
Suppose that Hypotheses \ref{hyp1:coeffA}, \ref{hyp1:coeffB}, \ref{hyp2:cauchyfunct}
(and Hypotheses \ref{hyp-3}, \ref{hyp-4} if $p$ is an exponential weight function) hold. Then, the function
$u$ in \eqref{eq2:solprdicauchynonomo}
is the unique solution to the problem \eqref{eq2:problemadicauchynonomo}
which belongs to $C_p([a,T]\times\RN)\cap C^{1,2}([a,T)\times\RN)$. Moreover, there exists a positive constant $\tilde C$, independent on $f$ and $\varphi$, such that
\begin{equation}
\label{eq2:dipcontdata}
\sup_{s\in[a,T]}\norma{u(s,\cdot)}_{C_p^{2+\theta}(\RN)}\leq\tilde C\left(\norma{\varphi}_{C_p^{2+\theta}(\RN)}+\sup_{s\in[a,T]}\norma{f(s,\cdot)}_{\sppesoteta}\right).
\end{equation}
\end{thm}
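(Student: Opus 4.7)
The plan is to split the candidate $u=u_1+u_2$ with $u_1(s,\cdot):=P_{s,T}\varphi$ and $u_2(s,\cdot):=\int_s^T P_{s,r}f(r,\cdot)\,dr$, to estimate each piece in $C_p^{2+\theta}(\RN)$ uniformly in $s\in[a,T]$, to verify that $u\in C_p([a,T]\times\RN)\cap C^{1,2}([a,T)\times\RN)$ is a classical solution by direct differentiation, and to conclude with uniqueness.

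For $u_1$ the estimate is immediate: Theorem \ref{teo1:contspalphaint} with source and target exponents both equal to $2+\theta$ gives
\[
\norma{u_1(s,\cdot)}_{C_p^{2+\theta}(\RN)}\le C\,e^{\omega(T-s)}\,\norma{\varphi}_{C_p^{2+\theta}(\RN)}
\]
with no singularity as $s\to T$. The $C^{1,2}$-regularity in $(s,x)$ and the identity $\partial_s u_1+L(s)u_1=0$ follow by differentiating the Gaussian representation \eqref{OU-intro} under the integral sign; joint continuity in $(s,x)$ is provided by Proposition \ref{prop1:contsOU}.

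For $u_2$ the $C_p^2$-part is routine: Theorem \ref{teo1:contspalphaint} with source exponent $\theta$ and integer target $\abs{\beta}\le 2$ yields
\[
\norma{D^\beta P_{s,r}f(r,\cdot)}_{C_p(\RN)}\le C(r-s)^{-(\abs{\beta}-\theta)/2}e^{\omega(r-s)}\norma{f(r,\cdot)}_{C_p^\theta(\RN)},
\]
and since $(\abs{\beta}-\theta)/2<1$ the integrand is integrable on $[s,T]$. The real obstacle, and the step I expect to require the most work, is the weighted H\"older seminorm $[(D^2 u_2(s,\cdot))/p]_{C_b^\theta}$, because Theorem \ref{teo1:contspalphaint} applied directly with target $2+\theta$ only delivers a nonintegrable $(r-s)^{-1}$ singularity. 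I would bypass this with a Campanato-type splitting of the weighted difference. For $x\ne y\in\RN$ set $\delta:=\min\{\abs{x-y}^2,T-s\}$ and
\[
\frac{D^2 u_2(s,x)}{p(x)}-\frac{D^2 u_2(s,y)}{p(y)}=\int_s^{s+\delta}\!\!\tilde\Delta_r\,dr+\int_{s+\delta}^T\!\!\tilde\Delta_r\,dr,
\]
with $\tilde\Delta_r:=(D^2 P_{s,r}f(r,\cdot))(x)/p(x)-(D^2 P_{s,r}f(r,\cdot))(y)/p(y)$. On the first integral I bound $\abs{\tilde\Delta_r}\le 2\norma{D^2 P_{s,r}f(r,\cdot)}_{C_p}\le C(r-s)^{-(2-\theta)/2}\norma{f(r,\cdot)}_{C_p^\theta}$ via Theorem \ref{teo1:contspalphaint} with target $2$, and integration produces $\delta^{\theta/2}\le\abs{x-y}^\theta$. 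On the second integral I apply the mean-value inequality, writing $\nabla((D^2 P_{s,r}f)/p)=(D^3 P_{s,r}f)/p-(D^2 P_{s,r}f)(\nabla p)/p^2$ and using that $\abs{\nabla p/p}$ is bounded for both the polynomial and the exponential weights (thanks to $\gamma\le 1/2$ in the exponential case), to conclude $\abs{\tilde\Delta_r}\le C(r-s)^{-(3-\theta)/2}\norma{f(r,\cdot)}_{C_p^\theta}\abs{x-y}$ via Theorem \ref{teo1:contspalphaint} with target $3$; since $\theta<1$ gives $(3-\theta)/2>1$, integration from $\delta$ yields $\abs{x-y}\,\delta^{-(1-\theta)/2}\le\abs{x-y}^\theta$. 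This step is where the full $C_p^3$-output of Theorem \ref{teo1:contspalphaint} is essential, which is precisely why Section~2 was pushed up to smoothness $3$.

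Once the estimate is in hand, the identity $\partial_s u+L(s)u=f$ is obtained by applying Leibniz's rule to $u_2$ together with the backward Kolmogorov relation $\partial_s P_{s,r}g=-L(s)P_{s,r}g$ applied to $g=f(r,\cdot)$ and to $g=\varphi$; the swap of $L(s)$ with the integral $\int_s^T$ is licit because of the bounds above, and joint continuity in $(s,x)$ is ensured by Proposition \ref{prop1:contsOU}. For uniqueness, if $v$ is another classical solution in the same class, $w:=u-v$ solves the homogeneous backward problem with $w(T,\cdot)\equiv 0$; It\^o's formula applied to $w(\tau,X(\tau,s,x))$ on $[s,T]$ (localized by stopping times to handle the unbounded drift) gives $w(s,x)=\mathbb{E}\bigl[w(T,X(T,s,x))\bigr]=0$, whence $u=v$. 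The estimate \eqref{eq2:dipcontdata} then assembles the bounds of the first two paragraphs. I expect the Campanato splitting for the weighted $D^2$-H\"older seminorm to be the central technical point.
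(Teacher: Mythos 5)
Your decomposition $u=u_1+u_2$ and the overall architecture match the paper, but on the two key steps you take genuinely different routes. For the central difficulty --- the uniform $C_p^{2+\theta}$ bound on $u_2(s,\cdot)=\int_s^TP_{s,r}f(r,\cdot)\,dr$ --- the paper does not estimate the H\"older seminorm pointwise: it splits the time integral at a free parameter $\xi$, bounds the near piece in $C_p^{\alpha}$ by $\xi^{1-(\alpha-\theta)/2}$ and the far piece in $C_p^{2+\alpha}$ by $\xi^{-(\alpha-\theta)/2}$ (with $\alpha\in(\theta,1)$), and reads off membership in $(C_p^{\alpha}(\RN),C_p^{2+\alpha}(\RN))_{1-(\alpha-\theta)/2,\infty}=C_p^{2+\theta}(\RN)$ via the $K$-functional and Proposition \ref{prop1:eqsppesointsphol}. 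Your Campanato splitting at $\delta=\min\{\abs{x-y}^2,T-s\}$ is the concrete avatar of the same idea (morally, choosing $\xi=\abs{x-y}^2$), and your exponent bookkeeping is correct; note that you still lean on the interpolation machinery of Subsection \ref{subs:generalcase}, since the estimates $C_p^{\theta}\to C_p^{2}$ and $C_p^{\theta}\to C_p^{3}$ with fractional source $\theta\in(0,1)$ are themselves obtained there. The paper's route is shorter once the interpolation identification is in place; yours is self-contained at the level of the seminorm and makes visible why the $C_p^3$ output of Section 2 is needed. For uniqueness, the paper substitutes $v=u/p$, observes that $v$ solves a Cauchy problem for a modified operator $\tilde L(s)$ admitting the Lyapunov function $1+\abs{x}^2$, and invokes a known maximum principle; this is where Hypothesis \ref{hyp-4} enters in the exponential case. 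Your It\^o/stopping-time argument is a legitimate alternative, but as stated it has a gap: since $w$ is only bounded by $p$, passing to the limit in $\mathbb{E}[w(\tau_n,X_{\tau_n})]$ requires uniform integrability of $p(X_{\tau_n})$ (moment or exponential-moment bounds for the Gaussian process, using $\gamma\le 1/2$), which you must supply.

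The other place you are too quick is the verification that $u$ is actually a classical solution. ``Leibniz's rule'' for $s\mapsto\int_s^TP_{s,r}f(r,\cdot)\,dr$ is not automatic: the integrand's $s$-derivative $-L(s)P_{s,r}f(r,\cdot)$ blows up like $(r-s)^{-1+(\theta-\gamma)/2}$ as $r\downarrow s$, so one needs a mean-value plus dominated-convergence argument with a singular integrable majorant (the paper's Step 3 of Proposition \ref{prop2:probcauchynonomo}), together with a separate proof that $(s,x)\mapsto\int_s^TL(s)P_{s,r}f(r,\cdot)(x)\,dr$ is continuous. Likewise, for $u_1$ with $\varphi$ merely in $C_p^{2+\theta}(\RN)$ the paper does not differentiate the Gaussian representation directly but approximates $\varphi$ by compactly supported truncations and passes to the limit in the second derivatives via an interpolation inequality. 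These steps are fixable with the bounds you already have, but they constitute the bulk of the paper's proof and should not be dismissed as routine.
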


First, in Subsection \ref{subs:uniqueness} we prove the uniqueness part of Theorem \ref{teo2:probcauchynonomo};
the existence part and estimate \eqref{eq2:dipcontdata} are proved in Subsection \ref{subs:existence}.

\subsection{Uniqueness of the solution of problem \eqref{eq2:problemadicauchynonomo}}
\label{subs:uniqueness}

\begin{prop}
\label{prop2:unicitàprobdicauchy}
Suppose that Hypothesis \ref{hyp2:cauchyfunct} holds.
Then, the Cauchy problem \eqref{eq2:problemadicauchynonomo} admits at most one solution in $C_p([a,T]\times\RN)\cap C^{1,2}([a,T)\times\RN)$.
\end{prop}

\begin{proof}
We show that the null function is the unique solution to problem \eqref{eq2:problemadicauchynonomo} with $f\equiv 0$ and $\varphi\equiv 0$. Suppose that $u\in C_p([a,T]\times\RN)\cap C^{1,2}([a,T)\times\RN)$ is a solution to such a problem. A straightforward computation reveals that the function $u/p$ belongs to $C_b([a,T]\times\RN)\cap C^{1,2}([a,T)\times\RN)$ and it solves the Cauchy problem
\begin{equation}
\label{eq2:probdicauchyomo2}
\left\{
\begin{array}{lll}
D_sv(s,x)+\tilde L(s)v(s,x)=0, & s\in[a,T), & x\in\RN, \\[2mm]
v(T,x)=0, && x\in\RN,
\end{array}
\right.
\end{equation}
where
\[
\begin{split}
\tilde L(s)\psi(x)
 =& \frac{1}{2}{\rm Tr}[Q(s)D^2\psi(x)]+\prodscal{A(s)x+h(s)+Q(s)\frac{Dp(x)}{p(x)},D\psi(x)} \\
& +\left(\frac{1}{2}{\rm Tr}\left [Q(s)\frac{D^2p(x)}{p(x)}\right ]+\left\langle A(s)x+h(s),\frac{Dp(x)}{p(x)}\right\rangle\right)\psi(x),
\end{split}
\]
on smooth functions $\psi$. The function $\varphi:\R^N\to\R$, defined by $\varphi(x)=1+\abs x^2$ for any $x\in\R^N$, is a Lyapunov function of the operator $\tilde L(s)$ (i.e., $\tilde L(s)\varphi\le \lambda\varphi$ for any $s\in [a,T]$ and some positive constant $\lambda$), and (up to reverting time) \cite[Theorem $4.1.3$]{lorenzi1:tesi} shows that $v\equiv 0$ is the unique bounded classical solution to \eqref{eq2:probdicauchyomo2}.

Hence $u/p\equiv 0$ in $[a,T]\times\RN$, i.e., $u\equiv 0$.
\end{proof}

\subsection{Existence to the solution of problem \eqref{eq2:problemadicauchynonomo}}
\label{subs:existence}

To prove the existence part of Theorem \ref{teo2:probcauchynonomo} we consider separately the two terms in the definition of the function $u$
in \eqref{eq2:solprdicauchynonomo}.

\begin{prop}
\label{prop2:probcauchyomo}
For every $\varphi\in\sppeso$, the function $P_{\cdot,T} \varphi$ belongs to $C_p([a,T]\times\R^N)\cap C^{1,2}([a,T)\times\RN)$ and it is the unique solution of the Cauchy problem
\begin{equation}
\label{eq2:problemadicauchyomo}
\left\{
\begin{array}{lll}
u_s(s,x)+L(s)u(s,x)=0, & s\in[a,T), & x\in\RN, \\[2mm]
u(T,x)=\varphi(x), && x\in\RN.
\end{array}
\right.
\end{equation}
Moreover, there exists a positive constant $C$, independent of $\varphi$, such that
\begin{equation}
\label{eq2:dipcontdata1}
\sup_{s\in[a,T]}\norma{\OUT\varphi}_{C_p^{2+\theta}(\RN)}\leq\tilde C\norma{\varphi}_{C_p^{2+\theta}(\RN)}.
\end{equation}
\end{prop}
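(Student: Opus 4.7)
The plan is to extract everything except the verification of the PDE from the results of Section~2, and to handle the backward equation either directly or by reducing to the classical smooth-bounded case via density. Taking both indices in Theorem~\ref{teo1:contspalphaint} equal to $2+\theta\in(2,3)$ immediately yields
\[
\|P_{s,T}\varphi\|_{C_p^{2+\theta}(\RN)}\leq Ce^{\omega(T-s)}\|\varphi\|_{C_p^{2+\theta}(\RN)},\qquad s\in[a,T],
\]
which is \eqref{eq2:dipcontdata1} and, in particular, gives $\sup_{s\in[a,T]}\|u(s,\cdot)/p\|_{\infty}<\infty$, where $u:=P_{\cdot,T}\varphi$. The joint continuity of $(s,x)\mapsto P_{s,T}\varphi(x)$ on $[a,T]\times\RN$, including the attainment of the final datum at $s=T$, is Proposition~\ref{prop1:contsOU} applied to the time-independent datum $f(r,x):=\varphi(x)$, which trivially satisfies \eqref{eq1:fcondition}. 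Spatial $C^2$-regularity for each fixed $s<T$ is also contained in Theorem~\ref{teo1:contspalphaint}.

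To obtain $u\in C^{1,2}([a,T)\times\RN)$ and $D_su+L(s)u=0$, observe that for $\psi\in C_b^2(\RN)$ the function $(s,x)\mapsto P_{s,T}\psi(x)$ is known to belong to $C^{1,2}([a,T)\times\RN)$ and to solve the backward equation; this can be verified by directly differentiating the Gaussian representation of $P_{s,T}\psi$ using the identities $\partial_sU(T,s)=-U(T,s)A(s)$, $\partial_sg(T,s)=-U(T,s)h(s)$, $\partial_sQ(T,s)=-U(T,s)Q(s)U^*(T,s)$, and matching the outcome against the explicit expressions for $D_xP_{s,T}\psi$ and $D_x^2P_{s,T}\psi$. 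For a general $\varphi\in C_p^{2+\theta}(\RN)$ I would approximate by $\varphi_n(x):=\varphi(x)\,\eta(x/n)$, where $\eta\in C_c^{\infty}(\RN)$ equals $1$ on $B(0,1)$. A direct computation using Lemma~\ref{lem1:carattsppesoteta} shows that $\varphi_n\in C_b^2(\RN)$, $\sup_n\|\varphi_n\|_{C_p^{2+\theta}(\RN)}<\infty$, and $\varphi_n\to\varphi$ locally uniformly together with derivatives up to order~$2$. Setting $u_n:=P_{\cdot,T}\varphi_n$, the classical case gives $D_s u_n=-L(s)u_n$ on $[a,T)\times\RN$. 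By Theorem~\ref{teo1:contspalphaint} the family $\{D^\beta u_n:|\beta|\leq 3\}$ is uniformly bounded in $C_p(\RN)$ on compact time subsets of $[a,T)$, so combining dominated convergence at the level of the Gaussian integrals with Arzel\`a--Ascoli yields local uniform convergence of $D^\beta u_n$ to $D^\beta u$ for $|\beta|\leq 2$, hence of $L(s)u_n$ to $L(s)u$. This forces $u\in C^{1,2}([a,T)\times\RN)$ with $D_su+L(s)u=0$. Uniqueness is Proposition~\ref{prop2:unicit�probdicauchy} applied with $f\equiv 0$.

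The main technical point is the density step: the approximants must be uniformly bounded in the \emph{weighted} norm $C_p^{2+\theta}(\RN)$ and converge strongly enough that the quantities appearing in $L(s)u_n$ pass to the limit. The cut-off $\eta(\cdot/n)$ is tailored to this: its derivatives come with negative powers of $n$, so, via Lemma~\ref{lem1:carattsppesoteta}, multiplication by $\eta(\cdot/n)$ is a uniformly bounded operator on $C_p^{2+\theta}(\RN)$. Once this is established, the remaining convergences are routine consequences of the uniform estimates in Theorem~\ref{teo1:contspalphaint}.
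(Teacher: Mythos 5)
Your overall strategy is the one the paper actually uses: truncate $\varphi$ by a smooth cut-off, invoke the known solvability of the backward problem for bounded data, use the uniform smoothing estimates of Theorem \ref{teo1:contspalphaint} to pass to the limit in $D_su_n=-L(s)u_n$, and conclude uniqueness from Proposition \ref{prop2:unicit\`aprobdicauchy} and estimate \eqref{eq2:dipcontdata1} from \eqref{eq1:contsppeso}. However, as written your argument has two concrete gaps. First, the statement is for every $\varphi\in\sppeso$, i.e.\ for merely continuous data with weighted growth, whereas you only treat $\varphi\in C_p^{2+\theta}(\RN)$: for a general $\varphi\in\sppeso$ the truncations $\varphi\,\eta(\cdot/n)$ lie in $C_c(\RN)\subset\splim$ but not in $C^2_b(\RN)$, so your base case (backward equation for $C_b^2$ data) does not apply to them. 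The restriction is self-inflicted: the representation $\OUT\psi(x)=\int\psi(y)\,\mathcal N_{a(s,T,x),Q(T,s)}(dy)$ is differentiated in $s$ through the Gaussian kernel, not through $\psi$, so the base case holds already for $\psi\in\splim$; this is exactly what the paper imports from \cite[Theorem 3.1]{lorenzi3:tesi}, and it is what makes the truncation argument cover all of $\sppeso$.

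Second, to conclude that $u=\OUT\varphi$ is differentiable in $s$ with $D_su=-L(s)u$, and that $D_xu$, $D^2_xu$ (hence $D_su$) are continuous on $[a,T)\times\RN$, you need the convergence $L(s)u_n\to L(s)u$ to be uniform in $s$ on compact subsets $[c,d]\subset[a,T)$ (locally uniformly in $x$), not merely for each fixed $s$. Dominated convergence in the Gaussian integrals plus Arzel\`a--Ascoli in the $x$-variable, as you describe, gives for each fixed $s$ the local uniform convergence of $D^\beta u_n(s,\cdot)$, $|\beta|\le 2$; equicontinuity in $s$ of these derivatives is not supplied by the $C^3$ bounds alone, so uniformity in $s$ does not follow. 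The paper closes this point with the interpolation inequality $\|g\|_{C^2(\overline{B(0,r)})}\le K\|g\|_{C(\overline{B(0,r)})}^{1/3}\|g\|_{C^3(\overline{B(0,r)})}^{2/3}$ applied to $g=\OUT\varphi_n-\OUT\varphi$ after taking $\sup_{s\in[c,d]}$: the sup-norm factor tends to zero uniformly in $s$ by dominated convergence, and the $C^3$ factor is bounded uniformly in $n$ and $s$ by Theorems \ref{teo1:stimasppesospkpolx} and \ref{teo1:stimasppesospkexp}. Inserting this step (or an equivalent quantitative one) is necessary to make your limit passage rigorous.
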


\begin{proof}
By \cite[Theorem 3.1]{lorenzi3:tesi} we know that, for any $f\in \splim$, problem \eqref{eq2:problemadicauchyomo} with initial datum $f\in\splim$ has $\OUT f$ as unique solution in $C_b([a,T]\times\RN)\cap C^{1,2}([a,T)\times\RN)$.

Let $\varphi\in\sppeso$. For every $n\in\N$, we consider a function $\theta_n\in C_c^\infty(\RN)$ such that $\chi_{B(0,n)}\leq \theta_n\leq\chi_{B(0,2n)}$, and we set $\varphi_n=\varphi\theta_n$. By dominated convergence, we have
\begin{equation}
\label{eq2:unifconv1}
\lim_{n\to +\infty}P_{\cdot,T} \varphi_n= P_{\cdot,T} \varphi,
\end{equation}
locally uniformly in $[a,T]\times\RN$.

We claim that $D_{x_i}\OUT \varphi_n$ and $D_{x_ix_j}\OUT\varphi_n$ converge, as $n$ tends to $+\infty$, respectively to $D_{x_i}\OUT \varphi$ and $D_{x_ix_j}\OUT\varphi$, locally uniformly with respect to $s$ and $x$, for every $i,j\in \{1,\ldots,N\}$. For this purpose, let us fix $r>0$ and $[c,d]\subset[a,T)$. Since $C^2(\overline{B(0,r)})$ is of class $J_{2/3}$ between $C(\overline{B(0,r)})$ and $C^3(\overline{B(0,r)})$, i.e., there exists $k>0$ such that
\[
\norma{f}_{C^2(\overline{B(0,r)})}\leq K\norma{f}_{C(\overline{B(0,r)})}^{1/3}\norma{f}_{C^3(\overline{B(0,r)})}^{2/3},
\]
for every $f\in C^3(\overline{B(0,r)})$ and some positive constant $K$ (see \cite{lunardi1:tesi}), we can estimate
\[
\begin{split}
\sup_{s\in [c,d]}\norma{\OUT\varphi_n- & \OUT\varphi}_{ C^2(\overline{B(0,r)})}
\leq K\sup_{s\in [c,d]}\norma{\OUT\varphi_n-\OUT\varphi}_{C(\overline{B(0,r)})}^\frac{1}{3} \\
& \quad \times\left(\sup_{s\in [c,d]}\norma{\OUT \varphi_n}_{C^3(\overline{B(0,r)})}
+\sup_{s\in [c,d]}\norma{\OUT \varphi}_{C^3(\overline{B(0,r)})}\right)^\frac{2}{3}.
\end{split}
\]

By Theorems \ref{teo1:stimasppesospkpolx} and \ref{teo1:stimasppesospkexp}, it follows that there exists a positive constant $K'=K'(N,r,c,d)$ such that
\[
\sup_{s\in [c,d]}\norma{\OUT \varphi_n}_{C^3(\overline{B(0,r)})}
\leq K'\norma{\varphi}_{\sppeso}.
\]
Moreover, \eqref{eq2:unifconv1} shows that
\[
\begin{split}
\lim_{n\to +\infty}\sup_{s\in [c,d]}\norma{\OUT\varphi_n-\OUT\varphi}_{C(\overline{B(0,r)})}=0.
\end{split}
\]
These two facts imply that
\[
\lim_{n\to+\infty}\sup_{s\in [c,d]}\norma{\OUT\varphi_n-\OUT\varphi}_{C^2(\overline{B(0,r)})}=0,
\]
from which the claim follows. As a byproduct, we deduce that the functions $D_{x_i}\OUT\varphi,D_{x_ix_j}\OUT\varphi$ are continuous in $[a,T)\times\R^N$.

Since $\varphi_n\in\splim$ for every $n\in\N$, we have
\[
D_s\OUT\varphi_n(x)=-L(s)\OUT \varphi_n(x), \quad \forall x\in\RN, \quad \forall s\in [a,T).
\]

By the above results, $L(s)\OUT \varphi_n$ tends to $L(s)\OUT\varphi$, locally uniformly in $[a,T)\times\R^N$, as $n\rightarrow\infty$. Therefore, $D_s\OUT\varphi_n$ converges locally uniformly in $[a,T)\times\R^N$ as well, and
we thus conclude that $\OUT \varphi$ is differentiable with respect to $s$ in $[a,T)\times\R^N$ and
\[
D_s\OUT \varphi(x)=-L(s)\OUT\varphi(x), \quad s\in[a,T), \quad x\in\RN.
\]
In particular, the function $D_sP_{\cdot,T}\varphi$ is continuous in $[a,T)\times\R^N$.
Finally, estimate \eqref{eq2:dipcontdata1} follows immediately from \eqref{eq1:contsppeso}.
\end{proof}

\begin{prop}
\label{prop2:probcauchynonomo}
The function
\[
v(s,x)=\int_s^TP_{s,r}f(r,\cdot)(x)dr,\qquad\;\, s\in [a,T),\;\,x\in\RN,
\]
is the unique solution to the Cauchy problem
\begin{equation}
\label{eq2:probdicauchynonomocondin0}
\left\{
\begin{array}{lll}
D_su(s,x)+L(s)u(s,x)=f(s,x), & s\in[a,T), & x\in\RN, \\[2mm]
u(T,x)=0, && x\in\RN,
\end{array}
\right.
\end{equation}
in $C_p([a,T]\times\RN)\cap C^{1,2}([a,T)\times\RN)$. Moreover, there exists a positive constant $\tilde C$, independent on $f$, such that
\begin{equation}
\label{eq2:dipcontdata2}
\sup_{s\in[a,T]}\norma{v(s,\cdot)}_{C_p^{2+\theta}(\RN)}\leq\tilde C\sup_{s\in[a,T]}\norma{f(s,\cdot)}_{\sppesoteta}.
\end{equation}
\end{prop}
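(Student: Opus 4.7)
The plan is to verify that the function $v$ in the statement belongs to $C_p([a,T]\times\R^N)\cap C^{1,2}([a,T)\times\R^N)$, solves \eqref{eq2:probdicauchynonomocondin0}, and satisfies \eqref{eq2:dipcontdata2}; uniqueness will then follow immediately from the preceding uniqueness proposition applied to the difference of two candidate solutions. First, I would use Theorem \ref{teo1:contspalphaint} with source index $\alpha=\theta$ and target index $k\in\{0,1,2\}$ to obtain
\[
\norma{P_{s,r}f(r,\cdot)}_{C_p^k(\R^N)}\leq \frac{C\, e^{\omega(r-s)}}{(r-s)^{(k-\theta)/2}}\sup_{r\in[a,T]}\norma{f(r,\cdot)}_{\sppesoteta},\qquad k=0,1,2.
\]
Since $\theta>0$, the exponent $(k-\theta)/2$ is strictly less than $1$ in all three cases, so the right-hand side is integrable in $r$ on $[s,T]$. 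A standard application of dominated convergence, together with Propositions \ref{prop1:derOU} and \ref{prop1:regderOUexp1}, then produces $D^\beta v(s,\cdot)$ for every $|\beta|\leq 2$ as continuous functions of $x$, bounded by $C\sup_s\norma{f(s,\cdot)}_{\sppesoteta}$ in the $C_p$-norm, while joint continuity in $(s,x)$ on $[a,T]\times\R^N$ comes from Proposition \ref{prop1:contsOU}.

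The main obstacle is the H\"older seminorm estimate $\sup_s [D^2_{ij} v(s,\cdot)/p]_{C^\theta_b(\R^N)}\leq C\sup_s\norma{f(s,\cdot)}_{\sppesoteta}$, which cannot be obtained by naive integration: applying Theorem \ref{teo1:contspalphaint} with source $\theta$ and target $2+\theta$ yields a factor $(r-s)^{-1}$ that is not integrable. I would instead apply the classical split-the-integral argument. For $x,y\in\R^N$ with $0<|y|\leq 1$, set $\sigma:=\min\{s+|y|^2,T\}$ and decompose $D^2_{ij}v(s,x+y)/p(x+y)-D^2_{ij}v(s,x)/p(x)$ as the sum of integrals of the corresponding integrand over $[s,\sigma]$ and over $[\sigma,T]$. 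On the short interval $[s,\sigma]$ I would bound each integrand by $2\norma{D^2(P_{s,r}f(r,\cdot))/p}_\infty\leq C(r-s)^{-(2-\theta)/2}\sup_r\norma{f(r,\cdot)}_{\sppesoteta}$, furnished by Theorem \ref{teo1:contspalphaint} with target $2$; the resulting integral is of order $|y|^\theta$. On the long interval $[\sigma,T]$ I would invoke the mean value theorem together with $\norma{D^3(P_{s,r}f(r,\cdot))/p}_\infty\leq C(r-s)^{-(3-\theta)/2}\sup_r\norma{f(r,\cdot)}_{\sppesoteta}$ from the same theorem with target $3$; the resulting integral $|y|\int_\sigma^T(r-s)^{-(3-\theta)/2}dr$ is again bounded by a multiple of $|y|^\theta$. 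That we need the H\"older seminorm of $D^2 v/p$ rather than that of $D^2 v$ causes no difficulty, since Theorem \ref{teo1:contspalphaint} controls precisely the weighted norms; one can also invoke Lemma \ref{lem1:carattsppesoteta}.

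For the time derivative, I would differentiate formally under the integral sign,
\[
D_s v(s,x)=-f(s,x)+\int_s^T \partial_s P_{s,r}f(r,\cdot)(x)\,dr=-f(s,x)-L(s)\int_s^T P_{s,r}f(r,\cdot)(x)\,dr,
\]
justifying the exchange of derivative and integral because $\norma{L(s)P_{s,r}f(r,\cdot)}_{\sppeso}\leq C\norma{P_{s,r}f(r,\cdot)}_{C_p^2(\R^N)}\leq C(r-s)^{-(2-\theta)/2}\sup_r\norma{f(r,\cdot)}_{\sppesoteta}$ is integrable on $[s,T]$, and $\partial_s P_{s,r}=-L(s)P_{s,r}$ holds on sufficiently smooth inputs. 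To handle the regularity at the lower limit one can approximate $f$ by data in $C_c^\infty$, as in the proof of Proposition \ref{prop2:probcauchyomo}, and pass to the limit using the uniform bounds from the previous two paragraphs. Finally, $v(T,x)=0$ directly from the definition, and $|v(s,x)/p(x)|\leq C(T-s)\sup_r\norma{f(r,\cdot)}_{\sppesoteta}$ yields both the terminal value in $C_p([a,T]\times\R^N)$ and, combined with the H\"older estimate above, the Schauder bound \eqref{eq2:dipcontdata2}.
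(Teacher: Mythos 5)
Your proposal is correct in substance and reaches the same conclusions, but the central step --- the bound of $v(s,\cdot)$ in $C_p^{2+\theta}(\R^N)$ --- is carried out by a genuinely different route. The paper splits the time integral at $s+\xi$, estimates the two pieces in $C_p^{2+\alpha}(\R^N)$ and $C_p^{\alpha}(\R^N)$ for an auxiliary $\alpha\in(\theta,1)$, and reads off membership of $v(s,\cdot)$ in the real interpolation space $(C_p^{\alpha}(\R^N),C_p^{2+\alpha}(\R^N))_{1-(\alpha-\theta)/2,\infty}=C_p^{2+\theta}(\R^N)$ via the $K$-functional and Proposition \ref{prop1:eqsppesointsphol}. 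You instead run the concrete version of the same idea: split the integral at $s+|y|^2$ and trade the sup-norm bound on $D^2(P_{s,r}f(r,\cdot))/p$ (singularity $(r-s)^{-(2-\theta)/2}$, integrable on the short interval and yielding $|y|^\theta$) against the mean value theorem and the third-order bound (singularity $(r-s)^{-(3-\theta)/2}$, yielding $|y|\cdot|y|^{\theta-1}$) on the long interval. The two arguments are mathematically equivalent --- the $K$-functional computation is the abstract encoding of your $|y|^2$-splitting --- but yours is more elementary: it uses only the integer-target estimates of Theorem \ref{teo1:contspalphaint} together with Lemma \ref{lem1:carattsppesoteta}, and avoids both the auxiliary index $\alpha$ and the identification of the interpolation space. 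Your treatment of $D_sv$ (differentiation under the integral sign, justified by the integrable singularity $(r-s)^{-1+\theta/2}$ of $L(s)P_{s,r}f(r,\cdot)$ and the boundary term supplied by Proposition \ref{prop1:contsOU}) is in essence the paper's Step 3, which reaches the same point via the mean value theorem in $s$ and dominated convergence.

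Two caveats. First, you establish continuity of $D^\beta v(s,\cdot)$ in $x$ for fixed $s$ together with uniform bounds, but membership in $C^{1,2}([a,T)\times\R^N)$ also requires joint continuity of $D_xv$, $D_x^2v$ and $D_sv$ in $(s,x)$; the paper devotes a nontrivial portion of its Steps 2 and 3 to this (an interpolation inequality in $C^2(\overline{B(x_0,1)})$ combined with dominated convergence), and your sketch should at least indicate the argument, even though it is routine given the bounds you already have. Second, your formal computation gives $D_sv=-f-L(s)v$, i.e. $D_sv+L(s)v=-f$, which contradicts the equation as stated; this sign discrepancy is inherited from the paper (whose mild-solution formula should carry a minus sign in front of the integral term for the equation as written), but you should either adjust the sign or explicitly flag the inconsistency rather than leave the computation at odds with the claim being proved.
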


\begin{proof}[Proof]
Being rather long we split the proof into some steps.
\medskip

\textbf{STEP $1$}. Here, we show that the function $v$ is bounded in $[a,T]$ with values in $C_p^{2+\theta}(\RN)$. For this purpose, we adapt to our situation the arguments in \cite{lorenzi1:tesi}. We fix $s\in [a,T]$ and split $v(s,x)=a_\xi(s,x)+b_\xi(s,x)$, where
\[
a_\xi(s,x)=
\begin{cases}
\displaystyle\int_{s+\xi}^TP_{s,r}f(r,\cdot)(x)dr, & 0\leq\xi\leq T-s, \\[4mm]
\displaystyle0, & \xi> T-s,
\end{cases}
\]
\[
b_\xi(s,x)=
\begin{cases}
\displaystyle\int_{s}^{s+\xi}P_{s,r}f(r,\cdot)(x)dr, & 0\leq\xi\leq T-s, \\[4mm]
\displaystyle\int_s^TP_{s,r}f(r,\cdot)(x)dr, & \xi> T-s,
\end{cases}
\]
for every $x\in\RN$.

Fix $\alpha\in(\theta,1)$. By Theorem \ref{teo1:contspalphaint} we obtain that
\[
\norma{a_\xi(s,\cdot)}_{C_p^{2+\alpha}(\RN)}
\leq \tilde C_{2,\alpha,\theta}\xi^{-(\alpha-\theta)/2}\sup_{s\in[a,T]}\norma{f(s,\cdot)}_{\sppesoteta},
\]
and
\[
\norma{b_\xi(s,\cdot)}_{\sppesoalpha}
\leq \tilde C_{\alpha,\theta}\xi^{1-(\alpha-\theta)/2}\sup_{s\in[a,T]}\norma{f(s,\cdot)}_{\sppesoteta}.
\]
for some positive constants $\tilde C_{2,\alpha,\theta}$ and $\tilde C_{\alpha,\theta}$, independent of $f$ and $s$.
It follows that
\[
\begin{split}
\xi^{-1+(\alpha-\theta)/2} & K(\xi,v(s,\cdot),\sppesoalpha,C_p^{2+\alpha}(\RN)) \\[1mm]
& \leq \xi^{-1+(\alpha-\theta)/2}\norma{b_\xi(s,\cdot)}_{\sppesoalpha}+\xi^{(\alpha-\theta)/2}\norma{a_\xi(s,\cdot)}_{C_p^{2+\alpha}(\RN)}\\[1mm]
& \leq \overline C\sup_{s\in[a,T]}\norma{f(s,\cdot)}_{\sppesoteta},
\end{split}
\]
for any $\xi>0$ and some positive constant $\overline C$, where
\[
K(\xi,v(s,\cdot),\sppesoalpha,C_p^{2+\alpha}(\RN))=
\inf_{\tiny
\left.
\begin{array}{l}
v(s,\cdot)=a+b, \\
a\in\sppesoalpha, \\
b\in C_p^{2+\alpha}(\RN),
\end{array}
\right.
}
\norma a_{\sppesoalpha}+\xi\norma b_{C_p^{2+\alpha}(\RN)}.
\]
Therefore, $v(s,\cdot)\in(\sppesoalpha,C_p^{2+\alpha}(\RN))_{1-(\alpha-\theta)/2,\infty}$ and
\begin{align*}
&\norma{v(s,\cdot)}_{(\sppesoalpha,C_p^{2+\alpha}(\RN))_{1-(\alpha-\theta)/2,\infty}}\\[1mm]
=&\sup_{\xi\in (0,1)}\xi^{-1+(\alpha-\theta)/2} K(\xi,v(s,\cdot),\sppesoalpha,C_p^{2+\alpha}(\RN))\\
\leq&\overline C\sup_{s\in[a,T]}\norma{f(s,\cdot)}_{\sppesoteta}.
\end{align*}

By Proposition \ref{prop1:eqsppesointsphol} we can infer that $v(s,\cdot)\in C_p^{2+\theta}(\R^N)$ for any $s\in[a,T]$, and it satisfies estimate \eqref{eq2:dipcontdata2}.
\medskip

\textbf{STEP $2$}. Let us now prove that $v$ is a continuous function in $[a,T]\times\RN$. In view of Step 1, we can limit ourselves to showing that $v(\cdot,x)$ is continuous in $[a,T]$, locally uniformly in $\R^N$ with respect to $x$.

Fix $s_0\in (a,T]$, $s<s_0$, $x\in\R^N$, and split
\[
\begin{split}
v(s,x)-v(s_0,x)
& = \int_{s_0}^T\left(P_{s,r}f(r,\cdot)(x)-P_{s_0,r}f(r,\cdot)(x)\right)dr+\int_{s}^{s_0}P_{s,r}f(r,\cdot)(x)dr \\[1mm]
& =: J_1(s,x)+J_2(s,x).
\end{split}
\]
Let us consider $J_2$; since
\[
\abs{P_{s,r}f(r,\cdot)(x)}\leq C_{0,0}e^{\omega_{0,0}(r-s)}p(x)\norma{f(r,\cdot)}_{\sppeso},
\]
for any $r\in [s,s_0]$ (see \eqref{eq1:stimacontpolx} and \eqref{eq1:stimacontexp}),
for any $x_0\in\R^N$, we can estimate
\[
\sup_{x\in B(x_0,1)}\abs{J_2(s,x)}\leq \tilde C\abs{s-s_0},
\]
for some positive constant $\tilde C$, independent of $s$.
Hence, $J_2(s,x)$ tends to $0$ as $s\to s_0^-$, uniformly with respect to $x\in B(x_0,1)$. By the arbitrariness of $x_0$, we conclude that
$J_2(s,x)$ tends to $0$ as $s\to s_0^-$, locally uniformly in $\R^N$.

Now, we consider the term $J_1$. For every fixed $r$,
$\abs{P_{s,r}f(r,\cdot)(x)-P_{s_0,r}f(r,\cdot)(x)}$ tends to $0$ as $s\to s_0^-$.
Moreover,
\[
\abs{P_{s,r}f(r,\cdot)(x)-P_{s_0,r}f(r,\cdot)(x)}<2\sup_{x\in B(x_0,1)}\sup_{s<s_0<r}\abs{P_{s,r}f(r,\cdot)(x)}<+\infty,
\]
where $x_0$ is as above. By dominated convergence, we can conclude that $J_1(s,x)$ tends to $0$ as $s\to s_0^+$, locally uniformly in $\R^N$.
We have so proved that $v(s,\cdot)$ tends to $v(s_0,\cdot)$ as $s\to s_0^-$, locally uniformly in $\R^N$.

Proving that $v(s,\cdot)$ tends to $v(s_0,\cdot)$ as $s\to s_0^+$ locally uniformly in $\R^N$, for any $s\in [a,T)$, is completely similar, hence we leave the details to the reader.
\medskip

\textbf{STEP $3$}: Finally, here we show that $v$ is differentiable with respect to $s$, its derivative is continuous in $[a,T]\times\R^N$ and $v$ solves problem \eqref{eq2:probdicauchynonomocondin0}.

We first show that $v$ is differentiable from the left with respect to $s$ in $(a,T]\times\R^N$ and the left-derivative equals the function
\begin{equation}
\label{eq2:derivsolution}
(s,x)\mapsto f(s,x)+\int_s^TL(s)P_{s,r}f(r,\cdot)(x)dr.
\end{equation}
Then, we will show that the function in \eqref{eq2:derivsolution} is continuous in $[a,T]\times\R^N$. These two properties will allow us to conclude that $v$ is continuously differentiable with
respect to $s$ in $[a,T]\times\R^N$ and that $v$ solves the Cauchy problem \eqref{eq2:probdicauchynonomocondin0}.

Fix $s_0\in (a,T]$, $\delta>0$ such that $(s_0-\delta,s_0]\subset (a,T]$ and $x_0\in\RN$. As it is easily seen,
\begin{equation}
\begin{split}
\frac{v(s,x)-v(s_0,x)}{s-s_0}
& = \frac{1}{s-s_0}\int_{s_0}^T\left(P_{s,r}f(r,\cdot)(x)-P_{s_0,r}f(r,\cdot)(x)\right)dr \\
& \quad +\frac{1}{s-s_0}\int^{s_0}_sP_{s,r}f(r,\cdot)(x)dr,
\end{split}
\label{2-2}
\end{equation}
for any $s\in (s_0-\delta,s_0)$ and any $x\in\R^N$.
By Proposition \ref{prop1:contsOU}, the second term in the right-hand side of \eqref{2-2} tends to $f(s_0,x)$
as $s\to s_0^-$.

Let us consider the other term in \eqref{2-2}. For this purpose, we observe that, for any $s\in(s_0-\delta,s_0)$, there exists $\xi_s\in(s,s_0)$ such that
\[
\frac{P_{s,r}f(r,\cdot)(x)-P_{s_0,r}f(r,\cdot)(x)}{s-s_0}= -L(\xi_s)P_{\xi_s,r}f(r,\cdot)(x),
\]
for every $x\in B(x_0,1)$ and every $r\in(s_0-\delta,s_0)$.

We claim that, for any $x\in\R^N$, the function $$r\mapsto \sup_{s\in (s_0-\delta,s_0)}\abs{L(\xi_s)P_{\xi_s,r}f(r,\cdot)(x)}\chi_{(s,T)}(r)$$ can be estimated from above by a function
which is integrable in $(s_0,T)$.
Once the claim is proved we will conclude, by dominated convergence, that, for any $x\in\R^N$, the first term in the right-hand side of \eqref{2-2} converges to
\[
-\int_{s_0}^TL(s_0)P_{s_0,r}f(r,\cdot)(x)dr
\]
as $s\to s_0^-$ since, clearly, $-L(\xi_s)P_{\xi_s,r}f(r,\cdot)(x)$ converges to $-L(s_0)P_{s_0,r}f(r,\cdot)(x)$ for any $r\in (s_0,T]$.
Observe that
\[
L(\xi_s)P_{\xi_s,r}f(r,\cdot)(x)\hskip -1truemm=\hskip -1truemm\frac{1}{2}{\rm Tr}[Q(\xi_s)D^2P_{\xi_s,r}f(r,\cdot)(x)]+\prodscal{A(\xi_s)x+h(\xi_s),\nabla P_{\xi_s,r}f(r,\cdot)(x)},
\]
for any $s>s_0$ and any $x\in\RN$, Since
$A$, $Q$ and $h$ are bounded and continuous functions, we can limit ourselves to estimating
$\abs{D_{x_k}P_{\xi_s,r}f(r,\cdot)(x)}$ and $\abs{D_{x_ix_j}P_{\xi_s,r}f(r,\cdot)(x)}$ from above by functions
which are integrable in $(s_0,T)$. From Theorem \ref{teo1:contspalphaint} we obtain
\[
\begin{split}
\abs{D_{x_k}P_{\xi_s,r}f(r,\cdot)(x)}
& \leq p(x)\norma{P_{\xi_s,r}f(r,\cdot)}_{\mathcal{C}_p^1(\RN)}\\
& \leq p(x)\frac{C_{1,\theta}e^{\omega_{1,\theta}(r-\xi_s)}}{(r-\xi_s)^{(1-\theta)/2}}\sup_{s_0\leq r\leq t}\norma{f(r,\cdot)}_{\sppesoteta} \\
& \leq p(x)\frac{C_{1,\theta}e^{\omega_{1,\theta}(T+\delta-s_0)}}{(r-s_0)^{(1-\theta)/2}}\sup_{s_0\leq r\leq t}\norma{f(r,\cdot)}_{\sppesoteta},
\end{split}
\]
and the function in the last side of the previous chain of inequalities is integrable in $(s_0,T)$, for any $x\in B(x_0,1)$.

Now, we estimate the second order derivatives of $P_{\xi_s,r}f(r,\cdot)$; using again Theorem \ref{teo1:contspalphaint} we have
\begin{equation}
\begin{split}
\abs{D_{x_ix_j}P_{\xi_s,r}f(r,\cdot)(x)}
& \leq p(x)\norma{P_{\xi_s,r}f(r,\cdot)}_{\mathcal{C}_p^{2+\gamma}(\RN)} \\[1mm]
& \leq p(x)\frac{C_{2+\gamma,\theta}e^{\omega_{2+\gamma,\theta}(r-\xi_s)}}{(r-\xi_s)^{1-(\theta-\gamma)/2}}
\norma{f(r,\cdot)}_{\sppesoteta} \\[1mm]
& \leq p(x)\frac{C_{2+\gamma,\theta}e^{\omega_{2+\gamma,\theta}(T+\delta-s_0)}}{(r-s_0)^{1-(\theta-\gamma)/2}}
\norma{f(r,\cdot)}_{\sppesoteta},
\end{split}
\label{3-3}
\end{equation}
for every $0<\gamma<\theta$, where the last side of \eqref{3-3}
defines an integrable function in $(s_0,T)$.
The claim follows.

It remains to prove that the function in \eqref{eq2:derivsolution} is continuous in $[a,T)\times\R^N$. Of course, we just need to deal with the integral term.

First, we observe that the function $\displaystyle x\mapsto\int_s^TL(s)P_{s,r}f(r,\cdot)(x)dr$ is continuous in $\R^N$, for any fixed $s\in[a,T]$. Indeed, by Theorem \ref{teo1:contspalphaint} we conclude that, for any $x_0\in\R^N$,
\[
\abs{L(s)P_{s,r}f(r,\cdot)(x)-L(s)P_{s,r}f(r,\cdot)(x_0)}\to 0,\qquad\;\,\forall r\in (s,T),
\]
as $x\rightarrow x_0$. Moreover,
\[
\norma{P_{s,r}f(r,\cdot)}_{C^2(\overline{B(x_0,1)})}\leq \tilde c(r-s)^{-1+(\theta-\gamma)/2},\qquad\;\,\forall r\in (s,T),
\]
for some positive constant $\tilde c$,
and we conclude by dominated convergence.

Hence, we only need to show that the function $\displaystyle s\mapsto \int_s^TL(s)P_{s,r}f(r,\cdot)(x)dr$ is continuous in $[a,T]$, locally uniformly with respect to $x\in\RN$.
For this purpose, let us fix $x_0\in\R^N$, $s_0\in(a,T)$, $r>s_0$ and $0<\delta<r-s_0$. Further, fix $0<\gamma<\theta$, and $s\in(s_0-\delta,s_0+\delta)$. By interpolation, it follows that
\begin{equation}
\label{eq2:interpintder}
\begin{split}
& \norma{P_{s,r}f(r,\cdot)-P_{s_0,r}f(r,\cdot)}_{C^2(\overline{B(x_0,1)})}  \\[1mm]
& \quad \leq C\norma{P_{s,r}f(r,\cdot)-P_{s_0,r}f(r,\cdot)}_{C(\overline{B(x_0,1)})}^{1-2/(2+\gamma)}
\norma{P_{s,r}f(r,\cdot)-P_{s_0,r}f(r,\cdot)}_{C^{2+\gamma}(\overline{B(x_0,1)})}^{2/(2+\gamma)} \\[1mm]
& \quad \leq 2C\norma{P_{s,r}f(r,\cdot)-P_{s_0,r}f(r,\cdot)}_{C(\overline{B(x_0,1)})}^{1-2/(2+\gamma)}
\norma{P_{s,r}f(r,\cdot)}_{C^{2+\gamma}(\overline{B(x_0,1)})}^{2/(2+\gamma)} \\[1mm]
& \quad \leq \tilde C\norma{P_{s,r}f(r,\cdot)-P_{s_0,r}f(r,\cdot)}_{C(\overline{B(x_0,1)})}^{1-2/(2+\gamma)}
(r-s)^{-1+\theta/(2+\gamma)},
\end{split}
\end{equation}
for any $r>\max\{s,s_0\}$, where $C$ and $\tilde C$ are positive constants. Therefore,
\[
\norma{P_{s,r}f(r,\cdot)-P_{s_0,r}f(r,\cdot)}_{C^2(\overline{B(x_0,1)})}\chi_{(\max\{s_0,s\},T)}\to 0,\qquad\;\forall r\in (s_0,T),
\]
as $s\rightarrow s_0$. This shows $(L(s)P_{s,r}f(r,\cdot)-L(s_0)P_{s_0,r}f(r,\cdot))\chi_{(\max\{s_0,s\},T)}(r)$ tends to $0$ as $s\to s_0$ for any
$r\in (s_0,T)$, locally uniformly with respect to $x$.
Using \eqref{eq2:interpintder}, and repeating the procedure of Step $2$, we can easily show that the function $s\mapsto\displaystyle\int_s^TL(s)P_{s,r}f(r,\cdot)(x)dr$ is continuous in $[a,T]$, locally uniformly with respect to $x$. The proof is now complete.
\end{proof}

\begin{proof}[Proof of Theorem \ref{teo2:probcauchynonomo}]
It follows immediately from Propositions \ref{prop2:unicitàprobdicauchy}, \ref{prop2:probcauchyomo} and \ref{prop2:probcauchynonomo}.
\end{proof}

\end{document}